\documentclass[12pt, reqno]{amsart}
\usepackage{amssymb,latexsym,amsmath,amscd,amsthm,graphicx, color}
\usepackage[all]{xy}
\usepackage{pgf,tikz}
\usepackage{mathrsfs}
\usetikzlibrary{arrows}
\usepackage[left=0.6 in, top=0.6 in, right=0.6 in, bottom=0.5 in]{geometry}

\makeatletter
\newcommand{\setword}[2]{%
  \phantomsection
  #1\def\@currentlabel{\unexpanded{#1}}\label{#2}%
}
\makeatother

\usepackage[linkcolor=blue, urlcolor=blue, citecolor=blue,
colorlinks, bookmarks]{hyperref}

\definecolor{uuuuuu}{rgb}{0.26666666666666666,0.26666666666666666,0.26666666666666666}
\definecolor{xdxdff}{rgb}{0.49019607843137253,0.49019607843137253,1.}
\definecolor{ffqqqq}{rgb}{1.,0.,0.}
\definecolor{ffqqqq}{rgb}{1.,0.,0.}
\definecolor{ffxfqq}{rgb}{1.,0.4980392156862745,0.}

\pagestyle{empty}

\definecolor{uuuuuu}{rgb}{0.26666666666666666,0.26666666666666666,0.26666666666666666}
\definecolor{qqwuqq}{rgb}{0.,0.39215686274509803,0.}
\definecolor{zzttqq}{rgb}{0.6,0.2,0.}
\definecolor{xdxdff}{rgb}{0.49019607843137253,0.49019607843137253,1.}
\definecolor{qqqqff}{rgb}{0.,0.,1.}
\definecolor{cqcqcq}{rgb}{0.7529411764705882,0.7529411764705882,0.7529411764705882}
\definecolor{sqsqsq}{rgb}{0.12549019607843137,0.12549019607843137,0.12549019607843137}

\theoremstyle{plain}

\newtheorem{theorem}[subsection]{Theorem}
\newtheorem{theorem1}[subsubsection]{Theorem}

\newtheorem{lemma}[subsection]{Lemma}
\newtheorem{defi}[subsection]{Definition}
\newtheorem{prop}[subsection]{Proposition}

\newtheorem{lemma1}[subsubsection]{Lemma}

\theoremstyle{definition}

\newtheorem{remark}[subsection]{Remark}
\newtheorem{remark1}[subsubsection]{Remark}

\newtheorem{notation1}[subsubsection]{Notation}


\newcommand{\uu}{\cup}
\newcommand{\ii}{\cap}
\newcommand{\UU}{\bigcup}

\newcommand{\ci}{\subseteq}
\newcommand{\sci}{\subset}

\newcommand{\set}[1]{\{#1\}}


\newcommand{\ga}{\alpha}
\newcommand{\gb}{\beta}

\renewcommand{\gg}{\gamma}

\newcommand{\gk}{\kappa}
\newcommand{\gl}{\lambda}

\newcommand{\gq}{\theta}

\newcommand{\tit}{\textit}

\newcommand{\D}[1]{\mathbb{#1}}

\newcommand{\te}{\text}

\newcommand{\ol}{\overline}


\begin{document}
To appear, Journal `Mathematics'
\title{Conditional Optimal Sets and the Quantization Coefficients for Some Uniform Distributions}

\address{$^{2}$School of Mathematics \\
	Northwest University Xi'an\\
	Shaanxi Province, 710069, China.}
\address{$^{1, 3}$School of Mathematical and Statistical Sciences\\
University of Texas Rio Grande Valley\\
1201 West University Drive\\
Edinburg, TX 78539-2999, USA.}

\author{$^1$Evans Nyanney}
\author{$^2$Megha Pandey}
 \author{$^3$Mrinal Kanti Roychowdhury}

\email{\{$^1$evans.nyanney01, $^3$mrinal.roychowdhury\}@utrgv.edu}
\email{$^2$meghapandey1071996@gmail.com}

%

\subjclass[2010]{60Exx, 94A34.}
\keywords{Probability measure, conditional quantization, optimal sets of $n$-points, quantization coefficient}

\date{}
\maketitle
 \pagestyle{myheadings}\markboth{E. Nyanney, M. Pandey, and M.K. Roychowdhury  }{Conditional Optimal Sets and the Quantization Coefficients for Some Uniform Distributions}

\begin{abstract}
Bucklew and Wise (1982) showed that the quantization dimension of an absolutely continuous probability measure on a given Euclidean space is constant and equals the Euclidean dimension of the space, and the quantization coefficient exists as a finite positive number.  By giving different examples, in this paper, we have shown that the quantization coefficients for absolutely continuous probability measures defined on the same Euclidean space can be different. We have taken uniform distribution as a prototype of an absolutely continuous probability measure. In addition, we have also calculated the conditional optimal sets of $n$-points and the $n$th conditional quantization errors for the uniform distributions in constrained and unconstrained scenarios. 
\end{abstract} 

\section{Introduction}

Quantization is the process of approximating a continuous-valued signal by a discrete set of values. It is a fundamental concept with widespread applications in engineering and technology. We refer to \cite{GG, GN, Z2} for surveys on the subject and comprehensive lists of references to the literature; see also \cite{AW, GKL, GL1, Z1}.

 \begin{defi}\label{EqVr1110}
Let $P$ be a Borel probability measure on $\D R^k$ equipped with a metric $d$ induced by a norm $\|\cdot\|$ on $\D R^k$. 
Let $S$ be a nonempty closed subset of $\D R^k$. Let $\gb\sci \D R^k$ be given with $\te{card}(\gb)=\ell$ for some $\ell\in \D N$. 
Then, for $n\in \D N$ with $n\geq \ell$, the \tit {$n$th conditional constrained quantization
error} for $P$ with respect to the constraint $S$ and the conditional set $\gb$, is defined by
\begin{equation}\label{EqVr121} 
V_{n}:=V_{n}(P)=\inf_{\ga} \Big\{\int \mathop{\min}\limits_{a\in\ga\uu\gb} d(x, a)^2 dP(x) : \ga \ci S, ~ 0\leq  \text{card}(\ga) \leq n-\ell \Big\},
\end{equation} 
where $\te{card}(A)$ represents the cardinality of the set $A$. 
\end{defi} 
 
\begin{defi}
A set $ \ga\uu\gb$, where $\ga \ci S$ and $P(M(b|\ga\uu \gb))>0$ for $b\in \gb$, for which the infimum in  \eqref{EqVr121} exists and contains no less than $\ell$ elements, and no more than $n$ elements is called a \tit{conditional constrained optimal set of $n$-points} for $P$ with respect to the constraint $S$ and the conditional set $\gb$.  
\end{defi}

We assume that $\int d(x, 0)^2 dP(x)<\infty$ to make sure that the infimum in \eqref{EqVr121} exists (see \cite{PR1}).  For a finite set $\gg \sci \D R^2$ and $a\in \gg$, by $M(a|\gg)$ we denote the set of all elements in $\D R^2$ which are nearest to $a$ among all the elements in $\gg$, i.e.,
$M(a|\gg)=\set{x \in \D R^2 : d(x, a)=\mathop{\min}\limits_{b \in \gg}d(x, b)}.$
$M(a|\gg)$ is called the \tit{Voronoi region} in $\D R^2$ generated by $a\in \gg$.

 Let $V_{n, r}(P)$ be a strictly decreasing sequence, and write $V_{\infty, r}(P):=\mathop{\lim}\limits_{n\to \infty} V_{n, r}(P)$. The numbers
\begin{equation} \label{eq55} \underline D(P):=\liminf_{n\to \infty}  \frac{2\log n}{-\log (V_n(P)-V_\infty(P))} \te{ and } \ol D(P):=\limsup_{n\to \infty} \frac{2\log n}{-\log (V_n(P)-V_\infty(P))}, \end{equation}
are called the \tit{conditional lower} and the \tit{conditional upper constrained quantization dimensions} of the probability measure $P$, respectively. If $\underline D (P)=\ol D (P)$, the common value is called the \tit{conditional constrained quantization dimension} of $P$ and is denoted by $D(P)$. For any $\gk>0$, the two numbers $\liminf_n n^{\frac 2 \gk}  (V_n(P)-V_\infty(P))$ and $\limsup_n  n^{\frac 2 \gk}(V_n(P)-V_\infty(P))$ are, respectively, called the \tit{$\gk$-dimensional conditional lower} and \tit{conditional upper constrained quantization coefficients} for $P$. If both of them are equal, then it is called the \tit{$\gk$-dimensional conditional constrained quantization coefficient} for $P$, and is denoted by $\lim_n  n^{\frac 2 \gk}(V_n(P)-V_\infty(P))$.   
\par
 If there is no conditional set, then by the \tit{$n$th constrained quantization error} for $P$ with respect to the constraint $S\ci \D R^k$, it is meant that 
\begin{equation} \label{eq001}
V_{n}:=V_{n}(P)= \inf\Big\{\int \mathop{\min}\limits_{a\in\ga} d(x, a)^2 dP(x) :  \ga\ci S \te{ and } 1\leq \text{card}(\ga) \leq n\Big\},
\end{equation}
and then the numbers $D(P)$ and $\lim_n  n^{\frac 2 \gk}(V_n(P)-V_\infty(P))$, if they exist, are called the \tit{constrained quantization dimension} and the \tit{$\gk$-dimensional constrained quantization coefficient} for $P$, respectively. A set $\ga\ci S$ for which the infimum in \eqref{eq001} exists is called a \tit{constrained optimal set of $n$-points} for $P$. 
\par 
If there is no constraint, i.e., if $S=\D R^k$, then by the \tit{$n$th conditional unconstrained quantization error} with respect to the conditional set $\gb$, it is meant that 
\begin{equation} \label{eq002}
V_{n}:=V_{n}(P)=\inf_{\ga} \Big\{\int \mathop{\min}\limits_{a\in\ga\uu\gb} d(x, a)^2 dP(x) : \ga \ci \D R^k, ~ 0\leq  \text{card}(\ga) \leq n-\ell \Big\},
\end{equation}
and then the numbers $D(P)$ and $\lim_n  n^{\frac 2 \gk}(V_n(P)-V_\infty(P))$, if they exist, are called the \tit{conditional unconstrained quantization dimension} and the \tit{$\gk$-dimensional conditional unconstrained quantization coefficient} for $P$, respectively. A set $\ga\ci S$ for which the infimum in \eqref{eq002} exists is called a \tit{conditional unconstrained optimal set of $n$-points} for $P$.
\par 
If there is no constraint and no conditional set, then by the \tit{$n$th unconditional quantization error} it is meant that 
\begin{equation} \label{eq003}
V_{n}:=V_{n}(P)=\inf \Big\{\int \mathop{\min}\limits_{a\in\ga} d(x, a)^2 dP(x) : \ga \ci \D R^k, ~ 1\leq  \text{card}(\ga) \leq n \Big\},
\end{equation}
and then the numbers $D(P)$ and $\lim_n  n^{\frac 2 \gk}(V_n(P)-V_\infty(P))$, if they exist, are called the \tit{unconstrained quantization dimension} and the \tit{$\gk$-dimensional unconstrained quantization coefficient} for $P$, respectively. A set $\ga\ci S$ for which the infimum in \eqref{eq003} exists is called an \tit{optimal set of $n$-means} for $P$. It is known that if the support of $P$ contains infinitely many elements than an optimal set of $n$-means contains exactly $n$ elements, and $V_\infty=\lim_{n\to \infty} V_n=0$.
\par
Constrained quantization and conditional quantization have recently been introduced by Pandey and Roychowdhury (see \cite{PR1, PR4}). After the introduction of constrained quantization, the quantization theory has now two classifications: constrained quantization and unconstrained quantization. Unconstrained quantization is traditionally known as quantization. Thus, the \tit{$n$th unconditional quantization error}, given by \eqref{eq003}, will traditionally be referred to as \tit{$n$th quantization error}. Likewise, \tit{unconstrained quantization dimension} and the \tit{$\gk$-dimensional unconstrained quantization coefficient} for $P$ will be referred to as \tit{quantization dimension} and the \tit{$\gk$-dimensional quantization coefficient} for $P$, respectively. For some other papers in the direction of constrained quantization and conditional quantization one can see \cite{BCDRV,BCDR,HNPR,PR3}. For unconstrained quantization, one can see \cite{ DFG, GG, GL2, GL3, GL, GN, GL1,  KNZ, P, P1, Z1, Z2} and the references therein. 
\par 

A seminal result by Bucklew and Wise (see \cite{BW}) established that for absolutely continuous probability measures defined on Euclidean spaces, the quantization dimension equals the Euclidean dimension of the space, and the quantization coefficient exists as a finite, positive constant.
This paper is motivated by the observation that although the quantization dimension remains invariant, the quantization coefficient may vary, even among absolutely continuous probability measures defined on the same Euclidean space. To investigate this phenomenon, we consider uniform distributions as prototypical examples of absolutely continuous measures.

The primary objectives of this paper are as follows:
\begin{itemize}
    \item To demonstrate, through various examples, that the quantization coefficients for absolutely continuous probability measures on the same Euclidean space can differ;
    \item To compute conditional optimal sets of $n$-points and the corresponding $n$th quantization errors under constrained and unconstrained scenarios;
    \item To examine the asymptotic quantization behavior in the presence and absence of conditional sets, particularly when the conditional set does not affect the quantization coefficient.
\end{itemize}

Additionally, the paper explores how the geometry of the support, such as line segments, circles, and boundaries of regular polygons, affects the quantization coefficient. This investigation highlights the structural dependencies of quantization efficiency, with a focus on determining how the shape and size of the support influence the values of the quantization coefficients.
 
\subsection*{Delineation} 
In Section~\ref{sec1} we give the basic preliminaries. In Section~\ref{sec2}, for a uniform distribution on a line segment taking different conditional sets we have calculated the conditional optimal sets of $n$-points and the $n$th conditional quantization errors. Then, for each conditional set we have calculated the quantization coefficient, and see that the quantization coefficient does not depend on the conditional set, but depends on the length of the line segment. In Section~\ref{sec3}, we have calculated the conditional optimal sets of $n$-points, the $n$th conditional quantization errors, the conditional quantization dimension, and the conditional quantization coefficient, in constrained scenario, for a uniform distribution defined on a circle of radius $r$ with respect to a given conditional set and a constraint. In addition, for the same probability distribution we have investigated the optimal sets of $n$-means and the $n$th quantization errors, and the quantization coefficient in unconstrained scenario. From the work in this section, we see that the quantization coefficient for a uniform distribution defined on a circle depends on the radius of the circle. In Section~\ref{sec4}, we have calculated the conditional optimal sets of $n$-points, the $n$th conditional quantization errors, and the conditional quantization coefficient for a uniform distribution defined on the boundary of a regular polygon which is inscribed in a circle of radius $r$ with respect to a given conditional set. From the work in this section, we see that the quantization coefficient for a uniform distribution defined on the boundary of a regular $m$-sided polygon depends on both the number of sides of the polygon and the length of the sides.

\section{Preliminaries} \label{sec1} 

For any two elements $(a, b)$ and $(c, d)$ in $\D R^2$, we write 
 \[\rho((a, b), (c, d)):=(a-c)^2 +(b-d)^2,\] which gives the squared Euclidean distance between the two elements $(a, b)$ and $(c, d)$. Let $p$ and $q$ be two elements that belong to an optimal set of $n$-points for some positive integer $n$, and let $e$ be an element on the boundary of the Voronoi regions of the elements $p$ and $q$. Since the boundary of the Voronoi regions of any two elements is the perpendicular bisector of the line segment joining the two elements, we have
\[\rho(p, e)-\rho(q, e)=0. \]
We call such an equation a \tit{canonical equation}. 

Let $P$ be a Borel probability measure on $\D R$ which is uniform on its support the closed interval $[a, b]$. Then, the probability density function $f$ for $P$ is given by 
\begin{align} \label{eq000} 
f(x)=\left\{\begin{array}{cc}
 \frac 1 {b-a} & \te{ if } a\leq x\leq b,\\
 0 & \te{ otherwise}.
\end{array}\right.
\end{align} 
Hence, we have $dP(x)=P(dx)=f(x) dx$ for any $x\in \D R$, where $d$ denotes the differential.  
   

 Let us now state the following proposition. For the details of the proof see \cite{BCDR}. 
\begin{prop}\emph{\cite{BCDR}} \label{prop0} 
Let $P$ be a uniform distribution on the closed interval $[a, b]$ and $c, d\in [a, b]$ be such that $a<c<d<b$. For $n\in \D N$ with $n\geq 2$, let $\ga_n$ be a conditional unconstrained optimal set of $n$-points for $P$ with respect to the conditional set $\gb=\set{c, d}$ such that $\ga_n$ contains $k$ elements from the closed interval $[a, c]$, $\ell$ elements from the closed interval $[c, d],$ and $m$ elements from the closed interval $[d, b]$ for some $k,\ell,  m\in \D N$ with $k, m\geq 1$ and $\ell\geq 2$. Then, $k+\ell+m=n+2$,
\begin{align*} \ga_n\ii [a, c]&= \Big\{a+\frac{(2j-1)(c-a)}{2k-1} : 1\leq j\leq k\Big\}, \\
 \ga_n\ii[c, d]&=\Big\{c+\frac{j-1}{\ell-1}(d-c) : 1\leq j\leq \ell\Big\},  \te{ and } \\
 \ga_n\ii [d, b]&=\Big\{d+\frac{2(j-1)(b-d)}{2m-1} : 1\leq j\leq m\Big\} 
 \end{align*} 
with the conditional unconstrained quantization error 
 \[V_n:=V_{k,\ell, m}(P)= \frac  1 {3(b-a)}\Big(\frac{(c-a)^3}{(2k-1)^2}+\frac 1 {4}\frac {(d-c)^3}{ (\ell-1)^2}+\frac{(b-d)^3}{(2m-1)^2}\Big).\]
 \end{prop}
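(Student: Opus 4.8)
The plan is to exploit the fact that the two conditional points $c$ and $d$ act as rigid dividers, so that the total quantization error splits additively over the three intervals $[a,c]$, $[c,d]$, and $[d,b]$, each of which can be analyzed independently. First I would note that for any $x\in[a,c]$ the nearest element of $\alpha_n$ lies among the points that are $\leq c$: every element of $\alpha_n$ in $[c,d]\cup[d,b]$ is $\geq c$ and hence no closer to $x$ than $c$ itself, while $c\in\alpha_n$. The analogous statements hold on $[c,d]$ and on $[d,b]$. Consequently, writing $f=\frac{1}{b-a}$ for the density, the error decomposes as
\[
V_n = \int_a^c \min_{p\in A_1}(x-p)^2\,f\,dx + \int_c^d \min_{p\in A_2}(x-p)^2\,f\,dx + \int_d^b \min_{p\in A_3}(x-p)^2\,f\,dx,
\]
where $A_1,A_2,A_3$ are the $k,\ell,m$ points of $\alpha_n$ in the respective closed intervals (so $c\in A_1\cap A_2$ and $d\in A_2\cap A_3$, which is where the double counting in $k+\ell+m=n+2$ originates). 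Since the three integrals involve disjoint sets of free coordinates, each may be treated separately, and the point $c$ (resp. $d$) is split at itself into a left contribution assigned to the first integral and a right contribution assigned to the second.

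Next I would invoke the standard necessary condition satisfied by any optimal configuration: at every \emph{free} point of $\alpha_n$ (that is, every point other than the fixed $c,d$) the point equals the centroid of its Voronoi region, whereas no such condition is imposed at $c$ and $d$. In one dimension the Voronoi boundary between consecutive points is their midpoint, and under the uniform density the centroid of an interval is its midpoint, so the centroid conditions become linear relations among the ordered coordinates. On $[c,d]$, with both endpoints fixed, the interior conditions read $2y_j=y_{j-1}+y_{j+1}$, forcing $c=y_1,\dots,y_\ell=d$ into arithmetic progression and giving $y_j=c+\frac{j-1}{\ell-1}(d-c)$. On $[a,c]$ only the right end $c$ is fixed while the Voronoi region of the smallest point abuts the support boundary $a$; the interior conditions again force arithmetic progression, and the boundary condition at the leftmost point is $3x_1=2a+x_2$, which together with $x_k=c$ fixes the common difference as $\frac{2(c-a)}{2k-1}$ and yields $x_j=a+\frac{(2j-1)}{2k-1}(c-a)$. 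The reflection $x\mapsto a+b-x$ converts the problem on $[d,b]$ into one of the same type, producing the stated formula there.

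Having located the points, I would evaluate the three integrals. In each interval an interior free point sits at the centre of a cell of width $w$ and contributes $\frac{1}{12}w^3$, while a fixed endpoint $c$ or $d$ contributes the one-sided integral $\frac{1}{3}w'^3$ over the portion of length $w'$ of its cell lying in that interval. On $[a,c]$ the $k-1$ interior cells each have width $\frac{2(c-a)}{2k-1}$ and the left cell of $c$ has width $\frac{c-a}{2k-1}$, so every contribution is a constant multiple of $\frac{(c-a)^3}{(2k-1)^3}$ and the sum collapses to $\frac{(c-a)^3}{3(2k-1)^2}$; the analogous sums give $\frac{(d-c)^3}{12(\ell-1)^2}$ on $[c,d]$ and $\frac{(b-d)^3}{3(2m-1)^2}$ on $[d,b]$. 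Multiplying by $f=\frac{1}{b-a}$ reproduces the asserted value of $V_n$.

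The step requiring the most care is the passage from the hypothesis that $\alpha_n$ is optimal to the explicit coordinates. One must (i) justify that the centroid conditions genuinely hold at each free point, checking that the boundary terms arising when differentiating the moving Voronoi limits cancel because the squared-distance integrand is continuous across each midpoint; (ii) verify that the Voronoi regions of the extreme free points actually reach the support endpoints $a$ and $b$, so that the correct one-sided centroid condition is used there rather than a two-sided one; and (iii) confirm that the three decoupled linear systems are nonsingular, so their solutions are unique and must coincide with the given optimal set. Once these points are settled, the remaining error computation is entirely routine.
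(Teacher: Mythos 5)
Your proposal is correct, and it is worth noting at the outset that this paper does not actually contain a proof of Proposition~\ref{prop0}: the statement is quoted from \cite{BCDR} (``For the details of the proof see \cite{BCDR}''), so there is no in-paper argument to compare against line by line. Judged on its own, your argument is sound and essentially complete, and it runs on the same mechanism the paper uses elsewhere. The decomposition step is valid: since $c$ and $d$ belong to every admissible quantizing set, for $x\in[a,c]$ no point of $\alpha_n$ lying in $[c,b]$ can beat $c$, so the error splits into three integrals whose free variables are disjoint, and the minimization decouples. The centroid (conditional expectation) property at the free points is exactly the fact the paper itself invokes in the proof of Theorem~\ref{theo452}; in one dimension it produces your linear relations, the systems are nonsingular, and uniqueness forces the optimal configuration to be the arithmetic progression $y_j=c+\frac{j-1}{\ell-1}(d-c)$ on $[c,d]$, the half-cell-shifted progression $x_j=a+\frac{(2j-1)(c-a)}{2k-1}$ on $[a,c]$ (your relation $3x_1=2a+x_2$ is indeed the centroid condition for the cell $[a,\frac{x_1+x_2}{2}]$), and its mirror image on $[d,b]$. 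The bookkeeping also checks out: with $w=\frac{2(c-a)}{2k-1}$, the sum $(k-1)\cdot\frac{1}{12}w^3+\frac{1}{3}\bigl(\frac{w}{2}\bigr)^3$ collapses to $\frac{(c-a)^3}{3(2k-1)^2}$, the middle interval gives $\frac{(d-c)^3}{12(\ell-1)^2}=\frac{1}{4}\cdot\frac{(d-c)^3}{3(\ell-1)^2}$, and multiplying by the density $\frac{1}{b-a}$ reproduces $V_{k,\ell,m}$; the degenerate cases $k=1$, $m=1$, $\ell=2$ (no free points in the corresponding interval) follow by direct integration and are consistent with the formulas. Two small remarks: the justification of the centroid property in the conditional setting that you flag as point (i) needs no differentiation of moving Voronoi boundaries at all --- replacing a free point by the centroid of its cell, with all other points and hence all cells held fixed, weakly decreases the error and strictly decreases it unless the point already is the centroid, which contradicts optimality; and your point (ii) is automatic, since the leftmost point of $\alpha_n$ necessarily has $[a,\frac{x_1+x_2}{2}]$ as its cell within the support. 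Your route (necessary conditions plus uniqueness of the solution) is the natural one and is what underlies the cited source and the paper's downstream uses of this proposition, e.g.\ Theorem~\ref{theoM0}, which is obtained from it as limiting cases.
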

 
 \begin{remark}
If nothing is specified, by conditional optimal sets of $n$-points and the conditional quantization errors, it is meant the conditional unconstrained optimal sets of $n$-points and the conditional unconstrained quantization errors, respectively. 
 \end{remark} 
The following theorem motivates us to make Remark~\ref{rem3}, Remark~\ref{rem5}, and Remark~\ref{rem8}. 
 \begin{theorem} \emph{(see \cite{PR4})} \label{TheoM1} 
In both constrained and unconstrained quantization, the lower and upper quantization dimensions and the lower and upper quantization coefficients for a Borel probability measure do not depend on the conditional set. 
\end{theorem}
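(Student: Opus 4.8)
The plan is to show that introducing a fixed conditional set $\gb$ with $\mathrm{card}(\gb)=\ell$ perturbs the quantization errors only by a bounded shift in the index $n$ (together, in the constrained case, with a finite enlargement of the constraint), and that neither operation affects the $\liminf$/$\limsup$ asymptotics defining the dimensions and coefficients. Since independence of the conditional set is transitive, it suffices to compare the conditional error $V_n^{\gb}$ against the corresponding error carrying no conditional set; the case of two nonempty conditional sets $\gb_1,\gb_2$ then follows by comparing each with the unconditional one (or by passing through $\gb_1\cup\gb_2$). Write $V_n^{\gb}$ for the $n$th conditional error and $W_n$ for the corresponding error with empty conditional set. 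In the unconstrained case $S=\D R^k$ I would first establish the two-sided bound
\[
W_n \le V_n^{\gb} \le W_{n-\ell}.
\]
The left inequality holds because for admissible $\ga$ the set $\ga\cup\gb$ has at most $n$ points and is a competitor for $W_n$; the right inequality follows by feeding an optimal $(n-\ell)$-point set for $W_{n-\ell}$ into the conditional problem as $\ga$ and using that enlarging a set by $\gb$ cannot increase the integrand. Since the support is infinite, $V_\infty^{\gb}=W_\infty=0$, so the same bound holds for the differences $V_n^{\gb}-V_\infty^{\gb}$ and $W_n-W_\infty$.

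The key elementary device is a \emph{bounded-shift lemma}: if $c_n\le d_n\le c_{n-\ell}$ for all large $n$ with $\ell$ fixed and $c_n\downarrow c_\infty$, then $d_n\to c_\infty$ and
\[
c_n-c_\infty \le d_n-c_\infty \le c_{n-\ell}-c_\infty .
\]
Taking $-\log$ and multiplying $2\log n$, or multiplying by $n^{2/\gk}$, and using $\tfrac{\log n}{\log(n-\ell)}\to 1$ and $\bigl(\tfrac{n}{n-\ell}\bigr)^{2/\gk}\to 1$, one sees that the reindexing $n\mapsto n-\ell$ is asymptotically invisible. Hence the lower and upper quantization dimensions and the lower and upper $\gk$-dimensional coefficients of $(d_n)$ coincide with those of $(c_n)$. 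Applied to the unconstrained sandwich, this already proves the theorem when $S=\D R^k$.

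When $S\ne\D R^k$ the left inequality can fail, because $\gb$ need not lie in $S$, so $\ga\cup\gb$ is not a legitimate competitor for the unconditional constrained problem on $S$. To repair this I would introduce the auxiliary unconditional constrained error $\widetilde V_n$ for the enlarged constraint $S\cup\gb$ (closed, since $\gb$ is finite), and prove the clean sandwich
\[
\widetilde V_n \le V_n^{\gb} \le \widetilde V_{n-\ell},
\]
exactly as above but now with all points drawn from $S\cup\gb$ and with $\ga$ taken to be the $S$-part of an optimal $(S\cup\gb)$-configuration; letting $n\to\infty$ forces $V_\infty^{\gb}=\widetilde V_\infty$, so the bounded-shift lemma gives that $V_n^{\gb}-V_\infty^{\gb}$ has the same dimensions and coefficients as $\widetilde V_n-\widetilde V_\infty$. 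The remaining, genuinely delicate, step is to show that enlarging the constraint from $S$ to $S\cup\gb$ by finitely many points does not change the dimension or the coefficient of $U_n-U_\infty$ for the unconditional constrained problem. I expect this to be the crux: there is no longer a clean two-sided index shift, since deleting the finitely many $\gb$-centers from an optimizer strictly increases the error with no automatic compensation. My plan is to localize, splitting $\D R^k$ into the bounded region $A=\{x: d(x,\gb)<d(x,S)\}$, where the extra centers are actually used, and its complement, where the two problems agree asymptotically; on $A$ the extra $\ell$ centers amount to adding a fixed number of points to a local quantization problem whose excess over its own infimum again decays at the same order, so a localized bounded-shift argument controls the difference. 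Combining the two regions shows the enlargement affects $U_n-U_\infty$ only to lower order, which closes the constrained case.
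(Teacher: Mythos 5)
Your unconstrained argument is correct: the two-sided bound $W_n \le V_n^{\beta} \le W_{n-\ell}$ holds for exactly the reasons you give, $W_\infty = V_\infty^{\beta}=0$, and your bounded-shift lemma then transfers all four quantities (lower/upper dimension, lower/upper coefficient) from $(V_n^{\beta})$ to $(W_n)$, which do not involve $\beta$. Note that the paper itself contains no proof of Theorem~\ref{TheoM1} --- it is quoted from \cite{PR4} --- so your argument has to stand on its own; for the unconstrained half it does. The constrained half, however, has a genuine gap, and it is not merely a missing technical step. Your sandwich $\widetilde V_n \le V_n^{\beta}\le \widetilde V_{n-\ell}$ is correct, but it reduces the theorem to the claim that replacing the constraint $S$ by $S\cup\beta$ changes neither the dimension nor the coefficient of the unconditional constrained problem, and that claim is \emph{false} whenever $\beta$ can permanently capture positive $P$-mass away from $S$; no localization argument can rescue it. Concretely: in $\mathbb{R}^2$ let $P$ be uniform on $[0,1]\times\{0\}$, let $S$ be the line $y=1$, and let $\beta=\{(1/2,h)\}$ with $\sqrt{3}/2<h<1$. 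With constraint $S$ alone, every admissible center $(a,1)$ has squared distance $(x-a)^2+1$ to a support point $(x,0)$, so $U_n=1+\frac{1}{12n^2}$ and the coefficient is $\frac{1}{12}$. In the conditional problem, $\beta$ beats every point of $S$ on the interval $W_h=\{x:(x-1/2)^2+h^2<1\}$ of length $2\sqrt{1-h^2}$, so the mass there is assigned to $\beta$ in every configuration, contributes only to $V_\infty^{\beta}$, and cancels in $V_n^{\beta}-V_\infty^{\beta}$; the $n^{-2}$-order term comes from quantizing only the remaining mass, supported on two intervals of total length $L_h=1-2\sqrt{1-h^2}$, giving $\lim_n n^2(V_n^{\beta}-V_\infty^{\beta})=\frac{L_h^3}{12}$ (the help $\beta$ gives near the endpoints of $W_h$ is of lower order, since its excess-error profile there is linear while the competing cells have width $O(1/n)$). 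Since $L_h$ depends on $h$, the conditional sets $\{(1/2,0.99)\}$ and $\{(1/2,0.9999)\}$ produce different coefficients ($\approx 0.031$ and $\approx 0.076$), both different from the unconditional $\frac{1}{12}\approx 0.083$, and all the error sequences involved are strictly decreasing, so no degeneracy clause excludes the example. This is precisely the regime your region $A=\{x:d(x,\beta)<d(x,S)\}$ was supposed to handle: on $A$ the two problems do not ``agree asymptotically''; the enlarged problem needs no centers there at all, which shrinks the effective support and hence the coefficient, which scales cubically in its length.

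The upshot is that Theorem~\ref{TheoM1}, read against Definition~\ref{EqVr1110} (which allows $\beta\not\subset S$), is false in the constrained case, so the step you flagged as the crux is unprovable rather than delicate. The statement that is true --- and the only form this paper ever uses, e.g.\ in Section~\ref{sec3} the conditional set $\{(r,0)\}$ lies on the constraint circle $L$ --- requires the additional hypothesis $\beta\subset S$. Under that hypothesis your own reduction closes the proof immediately: $S\cup\beta=S$, so $\widetilde V_n=U_n$ and the bounded-shift lemma applies verbatim, with no localization needed. You should state that hypothesis explicitly, finish the constrained case in one line, and delete the localization plan.
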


\begin{remark}
Given that the underlying spaces for all considered probability measures $P$ in this work are one dimensional, their quantization dimensions are given by  $D(P)=1$ (see \cite{BW}).
Hence, in the sequel we are mostly interested to calculate the quantization coefficients for different uniform distributions, though in some cases we have also calculated the conditional optimal sets of $n$-points and the $n$th conditional quantization errors in constrained and unconstrained scenarios. 
\end{remark}

In the following sections we give the main results of the paper.

 \section{Conditional Optimal Sets of $n$-Points and the Quantization Coefficients for Uniform Distributions on Line Segments} \label{sec2} 
 
Without any loss of generality we can assume the line segment as a closed interval $[a, b]$, where $0<a<b<+\infty$. Let $P$ be the uniform distribution defined on the closed interval $[a, b]$. Then, the probability density function $f$ for $P$ is given by \eqref{eq000}.  

The following theorem is a consequence of Proposition~\ref{prop0}, where $c = a$ and $d = b$; $c = d = a$; and $c = d = b$, respectively.
\begin{theorem}\label{theoM0}
Let $P$ be the uniform distribution on the line segment joining $a$ and $b$, where $a, b\in \D R$ with $a<b$. Then we have:  

$(i)$ the conditional optimal set of $n$-points with respect to the conditional set $\gb:=\set{a, b}$ is  
\[\Big\{a+\frac{j-1}{\ell-1}(b-a) : 1\leq j\leq n\Big\} \te{ with conditional quantization error } V_n=\frac {(b-a)^2}{12(n-1)^2};\]

$(ii)$ the conditional optimal set of $n$-points with respect to the conditional set $\gb:=\set{a}$ is  
\[\Big\{a+\frac{2(j-1)(b-a)}{2n-1}   : 1\leq j\leq n\Big\} \te{ with conditional quantization error } V_n=\frac {(b-a)^2}{3(2n-1)^2};\]

$(iii)$ the conditional optimal set of $n$-points with respect to the conditional set $\gb:=\set{b}$ is  
\[\Big\{a+\frac{(2j-1)(b-a)}{2n-1}   : 1\leq j\leq n\Big\} \te{ with conditional quantization error } V_n=\frac {(b-a)^2}{3(2n-1)^2}.\]
\end{theorem}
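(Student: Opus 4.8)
The plan is to derive all three parts directly from Proposition~\ref{prop0} by sending the two conditional points $c, d$ to the endpoints of $[a,b]$, so that one or two of the three subintervals $[a,c]$, $[c,d]$, $[d,b]$ collapse to a single point. In each case the collapsing subinterval forces its point-count to equal $1$ (it contributes only the degenerate conditional point itself), while all the remaining optimal points redistribute into the one surviving nondegenerate subinterval; that surviving count then equals $n$, consistent with $k+\ell+m=n+2$ once the two overlaps at $c$ and $d$ are accounted for.

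For part $(i)$ I would set $c=a$ and $d=b$, so that $[a,c]$ and $[d,b]$ degenerate to $\set{a}$ and $\set{b}$, forcing $k=m=1$ and hence $\ell=n$. The middle formula $\ga_n\ii[c,d]=\set{c+\frac{j-1}{\ell-1}(d-c):1\le j\le n}$ then becomes the $n$ equally spaced points $a+\frac{j-1}{n-1}(b-a)$, and in the error expression the first and third terms vanish because $(c-a)^3=(b-d)^3=0$, leaving $V_n=\frac{1}{3(b-a)}\cdot\frac14\cdot\frac{(b-a)^3}{(n-1)^2}=\frac{(b-a)^2}{12(n-1)^2}$. For part $(ii)$ I would set $c=d=a$, collapsing the conditional set to the single endpoint $\set{a}$ and both $[a,c]$ and $[c,d]$ to $\set{a}$, so $k=\ell=1$ and $m=n$; substituting into the right-hand formula $\ga_n\ii[d,b]$ and keeping only the surviving $(b-d)^3=(b-a)^3$ term gives the stated set and $V_n=\frac{(b-a)^2}{3(2n-1)^2}$. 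Part $(iii)$ is the mirror image: setting $c=d=b$ gives $\ell=m=1$ and $k=n$, and I would read off the left-hand formula $\ga_n\ii[a,c]$ together with the surviving $(c-a)^3=(b-a)^3$ term.

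The one genuine obstacle is that Proposition~\ref{prop0} is stated under the strict hypotheses $a<c<d<b$ with $\ell\ge 2$ (and $k,m\ge1$), whereas all three specializations sit on the boundary of this parameter region, so a literal substitution is not automatically justified. I would close this gap by a continuity argument: the quantization functional together with the closed-form expressions for the optimal points and for $V_{k,\ell,m}(P)$ are continuous in $(c,d)$ up to the closure of the region, so letting $c\downarrow a,\ d\uparrow b$ (resp.\ $c,d\downarrow a$ and $c,d\uparrow b$) shows that the limiting configuration is the claimed one and attains the limiting value of the error. Alternatively, and perhaps more cleanly, each boundary case can be re-verified from scratch: writing the relevant canonical equations for $n$ points with one or two of them pinned at the endpoints produces exactly the equally spaced configuration of part $(i)$ and the one-sided spaced configurations of parts $(ii)$ and $(iii)$, after which the error integral is a routine computation. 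I expect the only real care to be the bookkeeping of the double-counting of $c$ and $d$ across adjacent subintervals, so that the total number of distinct points comes out to be exactly $n$.
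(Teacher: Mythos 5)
Your proposal takes essentially the same route as the paper: the paper obtains Theorem~\ref{theoM0} precisely by specializing Proposition~\ref{prop0} to $c=a,\ d=b$; $c=d=a$; and $c=d=b$, which are exactly your substitutions, with the same resulting counts $(k,\ell,m)$ and the same cancellations in the error formula. The boundary-case issue you flag (the proposition assumes $a<c<d<b$ and $\ell\geq 2$) is real, and your continuity or direct-verification patch is extra rigor that the paper itself omits, as it performs the degenerate substitution without comment.
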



\begin{theorem}\label{theoM00}
Let $P$ be the uniform distribution on the line segment joining $a$ and $b$, where $a, b\in \D R$ with $a<b$. Then, the conditional quantization coefficients for $P$  with respect to the conditional sets $\set{a, b}$, $\set{a}$, and $\set{b}$ exist as finite positive numbers and each equals $\frac{(a-b)^2}{12}$. 
\end{theorem}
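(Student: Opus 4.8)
The plan is to read the coefficient directly off the closed-form expressions for $V_n$ supplied by Theorem~\ref{theoM0}. Since the support $[a,b]$ is one dimensional, the preceding remark (and \cite{BW}) gives $D(P)=1$, so the relevant quantity is the $1$-dimensional conditional unconstrained quantization coefficient $\lim_n n^{2}\bigl(V_n(P)-V_\infty(P)\bigr)$; here $\gk=D(P)=1$ and hence the exponent is $\frac{2}{\gk}=2$.

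First I would establish that $V_\infty=0$ in each of the three cases. In case $(i)$ we have $V_n=\frac{(b-a)^2}{12(n-1)^2}$, while in cases $(ii)$ and $(iii)$ we have $V_n=\frac{(b-a)^2}{3(2n-1)^2}$; each of these tends to $0$ as $n\to\infty$, so $V_\infty=\lim_{n\to\infty}V_n=0$. Consequently $V_n-V_\infty=V_n$, and the coefficient reduces to $\lim_n n^2 V_n$ in every case.

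Next I would evaluate this limit. For $\gb=\set{a,b}$,
\[ \lim_{n\to\infty} n^2 V_n=\frac{(b-a)^2}{12}\lim_{n\to\infty}\frac{n^2}{(n-1)^2}=\frac{(b-a)^2}{12}, \]
whereas for $\gb=\set{a}$ and for $\gb=\set{b}$,
\[ \lim_{n\to\infty} n^2 V_n=\frac{(b-a)^2}{3}\lim_{n\to\infty}\frac{n^2}{(2n-1)^2}=\frac{(b-a)^2}{3}\cdot\frac{1}{4}=\frac{(b-a)^2}{12}. \]
Because each sequence $\bigl(n^2 V_n\bigr)$ genuinely converges, its lower and upper limits coincide, so the lower and upper coefficients agree and the coefficient exists as a finite positive number equal to $\frac{(b-a)^2}{12}=\frac{(a-b)^2}{12}$ in all three cases.

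There is no substantial obstacle here: once the explicit formulas of Theorem~\ref{theoM0} are in hand, the argument is a routine asymptotic computation. The only points deserving care are the bookkeeping of the dimension $\gk=1$, which fixes the exponent $\frac{2}{\gk}=2$, and the observation that $\frac{n^2}{(2n-1)^2}\to\frac{1}{4}$ (rather than $1$); this extra factor $\frac{1}{4}$ is exactly what compensates for the coefficient $\frac{1}{3}$ in cases $(ii)$ and $(iii)$, so that the one-sided conditional sets $\set{a}$ and $\set{b}$ yield the same coefficient as the two-sided set $\set{a,b}$. This is consistent with Theorem~\ref{TheoM1}, which predicts that the quantization coefficient is independent of the conditional set.
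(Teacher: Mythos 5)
Your proposal is correct and follows essentially the same route as the paper: read off $V_n$ from Theorem~\ref{theoM0}, note $V_\infty=0$, and compute $\lim_n n^2 V_n$ in each of the three cases. The only difference is that you spell out the limit $\frac{n^2}{(2n-1)^2}\to\frac14$ for the one-sided conditional sets, which the paper leaves as ``similarly''; this is a harmless (indeed welcome) addition.
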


\begin{proof}
By Theorem~\ref{theoM0}$(i)$, we obtain the $n$th conditional quantization error for the uniform distribution $P$ with respect to the conditional set $\gb:=\set{a, b}$ as 
\[V_n=\frac {(b-a)^2}{12(n-1)^2}.\]
Then, \[V_\infty=\mathop{\lim}\limits_{n\to\infty} V_n=0 \te{ yielding } \lim\limits_{n\to \infty} n^2 (V_n-V_\infty)=\frac{(a-b)^2}{12}.\]
Similarly, if the conditional set is $\gb:=\set{a}$ or $\gb:=\set{b}$, by $(ii)$ and $(iii$) in Theorem~\ref{theoM0}, we obtain $\lim\limits_{n\to \infty} n^2 (V_n-V_\infty)=\frac{(a-b)^2}{12}$. 
\end{proof} 

\begin{remark} \label{rem3}
By Theorem~\ref{TheoM1} and Theorem~\ref{theoM00}, we see that the quantization coefficient for the uniform distribution $P$ on a line segment depends on the length of the line segment, and does not depend on the conditional sets. 
\end{remark}

 
\section{Optimal sets of $n$-Points and Quantization Coefficients for Uniform Distributions on Circles: Conditional Constrained, and Unconstrained Scenarios} \label{sec3} 
  
In this section, we have two subsections. In the first subsection,  we calculate the conditional constrained optimal sets of $n$-points and the $n$th conditional constrained quantization errors, the conditional constrained quantization dimension, and the conditional constrained quantization coefficient for a uniform distribution $P$ defined on a circle of radius $r$ with respect to a given conditional set and a constraint. In the second subsection, for the same probability distribution, we investigate the optimal sets of $n$-means and the $n$th quantization errors, and the quantization coefficient in unconstrained scenario.  
\par
Let $L$ be the circle of radius $r$. Without any loss of generality, we can take the equation of the circle as $x_1^2+x_2^2=r^2$, i.e., the parametric equations of the circle is given by
$L:=\set{(x_1, x_2) :  x_1=r\cos \gq, \, x_2=r\sin\gq \te{ for } 0\leq \gq\leq 2\pi}.$ Notice that any point on the circle can be given by $(r\cos\gq, r\sin \gq)$, which will be identified as $\gq$, where $0\leq \gq\leq 2\pi$. 
Let the positive direction of the $x_1$-axis cut the circle at the point $A$, i.e., $A$ is represented by the parametric value $\gq=0$. Let $s$ be the distance of a point on $L$ along the arc starting from the point $A$ in the counterclockwise direction. Then,
\[ds=\sqrt{\Big(\frac {dx_1}{d\gq}\Big)^2+\Big(\frac{dx_2}{d\gq}\Big)^2}\,d\gq=rd\gq,\]
where $d$ stands for differential. 
 Then, the  probability density function (pdf) $f(x_1, x_2)$ for $P$ is given by
\[f(x_1, x_2)=\left\{\begin{array}{ccc}
\frac 1 {2\pi r} & \te{ if } (x_1, x_2) \in L,\\
 0  & \te{ otherwise}.
\end{array}\right.
\]
Thus, we have $dP(s)=P(ds)=f(x_1, x_2) ds=\frac 1{2\pi } d\gq$.
Moreover, we know that if $\hat \gq$ radians is the
central angle subtended by an arc of length $S$ of the circle, then $S =r \hat \gq$, and
\[P(S)=\int_S dP(s)=\frac 1{2 \pi} \int_S d\gq= \frac{\hat\gq} {2\pi}.\]

\subsection{Conditional Quantization in Constrained Scenario} \label{sub1}
In this subsection to investigate the conditional quantization in constrained scenario for the uniform distribution $P$ on the circle $L$, we take the circle $L$ as the constraint and the set $\set{(r, 0)}$ as the conditional set. 
Let us define a function 
\[T : L \to [0, 2\pi r]  \te{ such that } T(\gq):=T((r\cos \gq, r\sin \gq))=r \gq,\] where $0\leq \gq\leq 2 \pi$. Then, notice that $T : L\setminus \set{(r, 0)} \to (0, 2\pi r) $ is a bijective function. Let $Q$ be the image measure of $P$ under the function $T$, i.e., $Q=TP$ such that for any Borel subset $A \ci [0, 2\pi r]$, we have 
\[Q(A)=TP(A)=P(T^{-1}(A)).\]
\begin{lemma1} \label{lemma21} 
The image measure $Q$ is a uniform distribution on $[0, 2\pi r]$. 
\end{lemma1} 
\begin{proof}
Since $P$ is a uniform distribution on $L$, we can assume that $P$ is also a uniform distribution on $L\setminus \set{(r, 0)}$, as the deletion, or addition, of a finite number of points from, or with, the support of a continuous probability measure does not change the distribution. 
Take any $[c, d]\ci (0, 2\pi r)$, where $0< c<d<2\pi r$. Since $T$ is a bijection, there exist $\gq_1$, $\gq_2$, where $0< \gq_1<\gq_2<2\pi $, such that $T(\gq_1)=r\gq_1=c$ and $T(\gq_2)=r\gq_2=d$. Then,   
\begin{align*}
Q([c, d])&=P(T^{-1}([c, d]))=P(\set{(r\cos \gq, r\sin \gq) : \gq_1\leq \gq\leq \gq_2})=\frac{\gq_2-\gq_1}{2\pi}\\
&=\frac{r\gq_2-r\gq_1}{2\pi r}=\frac {d-c}{2\pi r}.
\end{align*}
Notice $Q([c, d])=\gl([c, d])$, where $\gl$ is the normalized Lebesgue measure on $(0, 2\pi r)$. Hence, we can conclude that $Q$ is a uniform distribution on $(0, 2\pi r$), i.e., $Q$ is a uniform distribution on $[0, 2\pi r]$. 
\end{proof}

\begin{notation1}
For any two elements $c, d\in [0, 2\pi r]$, by $\ell(\set{c, d})$ it is meant the distance between the two elements $c, d$, i.e., $\ell(\set{c, d})=d-c$. Similarly, for any two elements $\gq_1, \gq_2 \in L$ with $\gq_1<\gq_2$, by $\ell(\set{\gq_1, \gq_2})$ it is meant the arc distance between the two elements $\gq_1$ and $\gq_2$, i.e., the length of the arc on $L$ subtended by the angle $\gq_2-\gq_1$, i.e.,  $\ell(\set{\gq_1, \gq_2})=r(\gq_2-\gq_1).$
\end{notation1} 

\begin{lemma1}\label{lemma22} 
The function $T: L\setminus \set{(r, 0)} \to (0, 2\pi r) $ preserves the distance. 
\end{lemma1} 
\begin{proof} 
Take any $c, d\in (0, 2\pi r)$ such that $0< c<d<2\pi r$. The lemma will be proved if we can prove that $\ell(\set{c, d})=\ell(T^{-1}(\set{c, d}))$. Since $T: L\setminus \set{(r, 0)} \to (0, 2\pi r) $ is a bijection, there exist $\gq_1, \gq_2\in L\setminus \set{(r, 0)}$ such that $T(\gq_1)=r\gq_1=c$ and $T(\gq_2)=r\gq_2=d$. Then, 
\[\ell(T^{-1}(\set{c, d}))=\ell(\set{\gq_1, \gq_2})=r(\gq_2-\gq_1)=d-c =\ell(\set{c, d}).\]
Thus, the lemma is yielded. 
\end{proof} 
The following lemma is a consequence of Proposition~\ref{prop0}, where $a=c=0$ and $b=d=2\pi r$.
\begin{lemma1}\label{lemma23} 
The conditional unconstrained optimal set for the uniform distribution $Q$ with respect to the conditional set $\set{0, 2\pi r}$ is given by 
$\Big\{\frac{(j-1)2\pi r}{n-1} : 1\leq j\leq n\Big\}$ 
with  conditional unconstrained quantization error $V_n(Q)=\frac{\pi^2 r^2}{3(n-1)^2}.$
\end{lemma1}

\begin{remark1} \label{rem0} 
Let $\set{a_1, a_2, \cdots, a_n}$ be a conditional unconstrained optimal set of $n$-points for $Q$ with respect to the conditional set $\set{0, 2\pi r}$, where $a_1=0$ and $a_n=2\pi r$,
Since both $P$ and $Q$ are uniform distributions, and $Q$ is the image measure of $P$ under the function $T$, and $T: L\setminus \set{(r, 0)} \to (0, 2\pi r) $ preserves the distance, we can say that the set $T^{-1}(\set{a_1, a_2, \cdots, a_{n}})$, i.e., the set  $\set{T^{-1}(a_j) : 1\leq j\leq n-1}$ forms a conditional constrained optimal set of $(n-1)$-points for $P$ with respect to the conditional set $\set{(r, 0)}$ and the constraint $L$, as $T^{-1}(0)=T^{-1}(2\pi r)= (r, 0) $. 
\end{remark1}

Let us now prove the following theorems, which give the main results in this subsection. 

\begin{theorem1} \label{theoM001}
Let $P$ be the uniform distribution on the circle of radius $r$ with center $(0, 0)$. Then, the set $\set{(r\cos \frac{(j-1)2\pi}{n}, r\sin \frac{(j-1)2\pi}{n}) : 1\leq j\leq n}$ forms a conditional constrained optimal set of $n$-points with respect to the conditional set $\set{(r, 0)}$ and the constraint $L$ with conditional constrained quantization error 
\[V_n=2r^2(1-\frac n \pi\sin \frac \pi n).\]
\end{theorem1}

\begin{proof}
By Lemma~\ref{lemma23}, we know that the set $\{\frac{(j-1)2\pi r}{n-1} : 1\leq j\leq n\}$ forms a conditional unconstrained optimal set of $n$-points for $Q$ with respect to the conditional set $\set{0, 2\pi r}$, where $n\geq 2$. Hence, by Remark~\ref{rem0}, the set 
$\set{T^{-1}(\frac{(j-1)2\pi r}{n-1}) : 1\leq j\leq n-1}$ forms a conditional constrained optimal set of $(n-1)$-points for $P$ with respect to the conditional set $\set{(0, \pi)}$ and the constraint $L$, where $(n-1)\geq 1$.  
Now, notice that 
\begin{align*}
&\set{T^{-1}(\frac{(j-1)2\pi r}{n-1}) : 1\leq j\leq n-1}=\set{\frac{(j-1)2\pi}{n-1} : 1\leq j\leq n-1}\\
&=\set{(\cos \frac{(j-1)2\pi}{n-1}, \sin \frac{(j-1)2\pi}{n-1}) : 1\leq j\leq n-1}.
\end{align*}
Hence, replacing $n$ by $n+1$, we deduce that the set $\set{(r\cos \frac{(j-1)2\pi}{n}, r\sin \frac{(j-1)2\pi}{n}) : 1\leq j\leq n}$ forms a conditional constrained optimal set of $n$-points with respect to the conditional set $\set{(0, \pi)}$ and the constraint $L$. Due to rotational symmetry, we obtain the conditional constrained quantization error as 
\begin{align*}
V_n&=n  (\te{distortion error contributed by the element } (r, 0))\\
&=\frac {n} {2\pi} \int_{-\frac {\pi} n}^{\frac {\pi} n} \rho((r\cos \gq, r\sin \gq), (r, 0))\,d\gq\\
&=2r^2(1-\frac n \pi\sin \frac \pi n). 
\end{align*} 
\end{proof}

\begin{theorem1}\label{theo451}
Let $P$ be the uniform distribution on the circle of radius $r$ with center $(0, 0)$. Then, with respect to the conditional set $\set{(r, 0)}$ and the constraint $L$, the conditional constrained quantization dimension $D(P)$ exists and equals one, and the conditional constrained quantization coefficient for $P$ exists as a finite positive number and equals $\frac{\pi ^2 r^2}{3}$, i.e.,
$\lim\limits_{n\to \infty} n^2 (V_n-V_\infty)=\frac{\pi ^2 r^2}{3}.$
\end{theorem1}

\begin{proof}
By Theorem~\ref{theoM001}, we obtain the $n$th conditional constrained quantization error for the uniform distribution $P$ with respect to the conditional set $\set{(r, 0)}$ and the constraint $L$ as 
$V_n=2r^2(1-\frac n \pi\sin \frac \pi n).$
Then, $V_\infty=\mathop{\lim}\limits_{n\to\infty} V_n=0$. 
Hence, 
\[D(P)=\lim_{n\to \infty} \frac{2\log n}{-\log (V_n-V_\infty)}=1 \te{ and } \lim\limits_{n\to \infty} n^2 (V_n-V_\infty)=\frac{\pi ^2 r^2}{3}.\]
\end{proof} 
\begin{remark1}\label{rem5} 
By Theorem~\ref{TheoM1}, we know that quantization dimension and the quantization coefficient in constrained and unconstrained cases do not depend on the conditional set. Thus, 
by Theorem~\ref{theo451}, we see that constrained quantization dimension of the uniform distribution $P$ with respect to the constraint $L$ equals one, which is the dimension of the underlying space where the support of the probability measure is defined, and does not depend on the radius $r$ of the circle. This fact is not true, in general, in constrained quantization, for example, one can see \cite{PR3, PR1}. However, we see that the constrained quantization coefficient for the uniform distribution $P$ with respect to the constraint $L$ depends on the radius $r$ of the circle. 
\end{remark1}

 \subsection{Quantization in Unconstrained Scenario}
In this subsection, we investigate the optimal sets of $n$-means, $n$th quantization errors, and the quantization coefficient for the uniform distribution $P$ when there is no constraint and no conditional set. The following lemma is a generalized version of a similar theorem that appears in \cite{RR}.   
\begin{theorem1} \label{theo452}
Let $\ga_n$ be an optimal set of $n$-means for the uniform distribution $P$ on the circle $x_1^2+x_2^2=r^2$ for $n\in\D N$. Then,
\[\ga_n:=\Big\{ \frac {nr}{ \pi} \Big(\sin (\frac \pi n) \cdot \cos ((2j-1){\frac \pi n}), \   \sin (\frac \pi n) \cdot \sin ((2j-1){\frac \pi n})\Big) : j=1, 2, \cdots, n \Big  \} \]
forms an optimal set of $n$-means, and the corresponding quantization error is given by $r^2(1-\frac{n^2}{\pi^2}  \sin^2\frac{\pi }{n}).$
\end{theorem1}
\begin{proof}
Let $\ga_n:=\set{a_1, a_2, \cdots, a_n}$ be an optimal set of $n$-means for $P$. Let the boundaries of the Voronoi regions of $a_k$ intersect $L$ at the points given by the parameters $\gq_{k-1}$ and $\gq_k$ such that $\gq_{k-1}<\gq_k$ for $1\leq k\leq n$. Without any loss of generality, we can assume that $\gq_0=0$ and $\gq_n=2\pi$. Since, $P$ is a uniform distribution and the circle $L$ is rotationally symmetric, without going into the much details of calculations, we see that  
\begin{equation*} \gq_1-\gq_0=\gq_2-\gq_1=\gq_3-\gq_2=\cdots=\gq_n-\gq_{n-1}=\frac{\gq_n-\gq_0}{n}=\frac{2\pi}{n}.
\end{equation*}
implying $\gq_k=\frac {2\pi k}{n}$ for $0\leq k\leq n$. 
It is well-known that in unconstrained quantization, the elements in an optimal set are the conditional expectations in their own Voronoi regions.  Hence, for $1\leq k\leq n$, we have 
 \begin{align*}
a_k&=\frac {2 \pi}  {\gq_{k}-\gq_{k-1}} \int_{\gq_{k-1}}^{\gq_k} \frac 1 {2\pi}( r\cos \gq, r\sin \gq) d\gq=\frac r {\gq_k-\gq_{k-1}} \left(\sin \gq_{k}-\sin\gq_{k-1}, \, \cos\gq_{k-1}-\cos \gq_k\right), 
\end{align*}
yielding \[ a_k=\frac {nr}{ \pi} \Big(\sin (\frac \pi n) \cdot \cos ((2k-1){\frac \pi n}), \   \sin (\frac \pi n) \cdot \sin ((2k-1){\frac \pi n})\Big).\]
Let $V_n$ be the $n$th quantization error. Due to symmetry, the distortion errors contributed by $a_k$ in their own Voronoi regions are equal for all $1\leq k\leq n$. Again, notice that $\gq_0=0$, $\gq_1=\frac{2\pi}{n}-\gq_0=\frac {2\pi} n$, and $a_1=\frac r{\gq_1}(\sin \gq_1, 1-\cos \gq_1)$. Hence, 
\begin{align*}
V_n&= \frac n{2\pi}\int_{\gq_0}^{\gq_1} \rho((r\cos \gq, r\sin \gq), a_1) \, d \gq =r^2(1-\frac{n^2}{\pi^2}  \sin^2\frac{\pi }{n}).
\end{align*} 
\end{proof}

By Theorem~\ref{theo452}, we have
$
V_n = r^2 \left(1 - \frac{n^2}{\pi^2} \sin^2 \frac{\pi}{n} \right),
$
and hence
$
\lim_{n \to \infty} n^2 V_n = \frac{\pi^2 r^2}{3},
$
which motivates us to give the following theorem.

\begin{theorem1}\label{theo453}
Let $P$ be the uniform distribution on the circle of radius $r$ with center $(0, 0)$. Then, the quantization coefficient for $P$ exists as a finite positive number and equals $\frac{\pi ^2 r^2}{3}$.
\end{theorem1}   
 
 \begin{remark1}\label{rem6}
 Theorem~\ref{theo453} implies that the quantization coefficient for a uniform distribution on a circle of radius $r$, though exists as a finite positive number, depends on the radius $r$ of the circle. 
 \end{remark1}

 \section{Conditional Optimal Sets and the Quantization Coefficients for the Uniform Distributions on the Boundaries of the Regular Polygons} \label{sec4} 
In this section, for a uniform distribution defined on the boundary of a regular $m$-sided polygon, we calculate the conditional optimal sets of $n$-points and the $n$th conditional quantization errors, and the conditional quantization coefficient taking the conditional set as the set of all vertices of the polygon. \par
Let $P$ be the uniform distribution defined on the boundary $L$ of a regular $m$-sided polygon given by $A_1A_2\cdots A_m$ for some $m\geq 3$. Without any loss of generality we can assume that the polygon is inscribed in the circle $x^2+y^2=r^2$ which has center $O(0, 0)$ and radius $r$ with the Cartesian coordinates of the vertex $A_1$ as $(r, 0)$. Let $\gq$ be the central angle subtended by each side of the polygon, and let $\gq_j$ be the polar angles of the vertices $A_j$. Then, we have $\gq=\frac {2\pi} m$ and $\gq_j=(j-1)\frac {2\pi} m$. Then, the polar coordinates of the vertices $A_j$ are given by $(r\cos \gq_j, r\sin \gq_j)$. Hence, if $\ell$ is the length of each of the sides $A_jA_{j+1}$ for $1\leq j\leq m$, where the vertex $A_{m+1}$ is identified as the vertex $A_1$, then we have  
\[\ell= \te{length of } A_j A_{j+1}=\sqrt{\rho((r\cos \gq_j, r\sin \gq_j), (r\cos \gq_{j+1}, r\sin \gq_{j+1}))}=2r\sin \frac{\pi}m.\]
The probability density function (pdf) $f$ for the uniform distribution $P$ is given by $f(x, y)=\frac 1{m\ell}$ for all $(x, y)\in A_1A_2\cdots A_m$, and zero otherwise. Moreover, we can write 
\[L=\UU_{j=1}^m L_j, \te{ where } L_j \te{ represents the side } A_jA_{j+1} \te{ for } 1\leq j\leq m.\]
Notice that 
\[A_jA_{j+1}=\set{(1-t)(r \cos \gq_j, r\sin \gq_j)+t(r\cos \gq_{j+1}, r\sin \gq_{j+1}) : 0\leq t\leq 1}.\] 
for $1\leq j\leq m$. Write 
\begin{align} a_j:&=-\sec \frac{\pi }{m} \Big(j \sin\frac{2 \pi  (j-1)}{m}-(j-1) \sin \frac{2 \pi  j}{m}\Big), \label{Me1}\\
b_j:&=-2 \sin \frac{\pi }{m} \csc \frac{2 \pi }{m} \Big((j-1) \cos \frac{2 \pi  j}{m}-j \cos\frac{2 \pi  (j-1)}{m}\Big), \label{Me2}\\
c_j:&=2 (j-1) r \sin \frac{\pi }{m}.\label{Me3}
\end{align} 
Let us consider the affine transformation  
\[T: L \to [0, 2mr\sin \frac{\pi} m]  \te{ such that } T(x, y)=a_j x +b_j y \te{ if }  (x, y) \in A_jA_{j+1},\]
where $a_j$ and $b_j$ are given by \eqref{Me1} and \eqref{Me2} for all  $1\leq j\leq m$. 
 Then, notice that $T : L\setminus \set{(r, 0)} \to (0, 2mr\sin \frac{\pi} m) $ is a bijective function. 
   Let $Q$ be the image measure of $P$ under the function $T$, i.e., $Q=TP$ such that for any Borel subset $A \ci [0, 2mr\sin \frac{\pi} m]$, we have 
\[Q(A)=TP(A)=P(T^{-1}(A)).\]
By the distance between any two elements in $[0, 2mr\sin \frac{\pi} m]$, it is meant the Euclidean distance between the two elements. On the other hand, by the distance between any two elements on $L$, it is meant the Euclidean distance between the two elements along the polygonal arc $L$ in the counterclockwise direction. Let $T_j$ be the restriction of the mapping $T$ to the set $A_jA_{j+1}$, i.e., $T_j=T|_{A_jA_{j+1}}$ for $1\leq j\leq m$. Notice that each $T_j$ is a bijective function.  
 \begin{lemma}\label{lemma221} 
The function $T: L\setminus \set{(r, 0)} \to (0,2mr\sin \frac{\pi} m) $ preserves the distance. 
\end{lemma}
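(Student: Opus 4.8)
The plan is to show that $T$ restricts to an orientation-preserving isometry on each side of the polygon, and that the images of the sides tile the target interval consecutively; then arc length along $L$ is matched term-by-term by Euclidean distance in $[0,2mr\sin\frac{\pi}{m}]$.

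First I would fix a side $A_jA_{j+1}$ and parametrize it by $P(t)=(1-t)A_j+tA_{j+1}$ for $t\in[0,1]$. Since each side has length $\ell=2r\sin\frac{\pi}{m}$, the arc distance along $L$ between $P(t_1)$ and $P(t_2)$ equals $|t_1-t_2|\ell$. Because $T_j(x,y)=a_jx+b_jy$ is linear, $T_j(P(t))=(1-t)T_j(A_j)+tT_j(A_{j+1})$ is affine in $t$, so it suffices to evaluate $T_j$ at the two endpoints.

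The core computation is to verify $T_j(A_j)=c_j$ and $T_j(A_{j+1})=c_{j+1}$, with $c_j$ given by \eqref{Me3}. The key simplification is that $2\sin\frac{\pi}{m}\csc\frac{2\pi}{m}=\sec\frac{\pi}{m}$, so by \eqref{Me1} and \eqref{Me2} both coefficients carry the common factor $-\sec\frac{\pi}{m}$. Substituting $A_j=(r\cos\gq_j,r\sin\gq_j)$ with $\gq_j=(j-1)\frac{2\pi}{m}$ and expanding, the products split into a part with coefficient $j$ and a part with coefficient $j-1$; one part cancels identically, while the other collapses under the identity $\sin((j-1)\phi)\cos(j\phi)-\cos((j-1)\phi)\sin(j\phi)=-\sin\phi$, where $\phi=\frac{2\pi}{m}$. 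Using $\sec\frac{\pi}{m}\sin\frac{2\pi}{m}=2\sin\frac{\pi}{m}$ then yields $T_j(A_j)=2(j-1)r\sin\frac{\pi}{m}=c_j$ and, by the same mechanism applied at the other vertex, $T_j(A_{j+1})=2jr\sin\frac{\pi}{m}=c_{j+1}$.

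Since $c_{j+1}-c_j=2r\sin\frac{\pi}{m}=\ell$, the affine formula gives $T_j(P(t))=c_j+t\ell$, whence $|T_j(P(t_1))-T_j(P(t_2))|=|t_1-t_2|\ell$, matching the arc distance; thus $T$ maps each side isometrically onto $[c_j,c_{j+1}]$. Finally, for points lying on distinct sides I would invoke additivity: since $c_1=0$, $c_{m+1}=2mr\sin\frac{\pi}{m}$, and consecutive intervals abut, the intervals $[c_j,c_{j+1}]$ partition the target interval in the same order in which the sides are traversed counterclockwise. Hence the total arc distance, being the sum of the within-side contributions, equals the Euclidean distance between the images, and $T$ preserves distance on all of $L\setminus\set{(r,0)}$. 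I expect the trigonometric endpoint computation to be the main obstacle — not because it is deep, but because it requires the two simplifications (the $\csc$-to-$\sec$ reduction and the angle-difference collapse) to be applied carefully to keep the bookkeeping clean; once the endpoints are pinned down, the isometry and tiling arguments are routine.
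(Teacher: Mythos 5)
Your proposal is correct and follows essentially the same route as the paper: decompose $L$ into the sides $A_jA_{j+1}$, show that $T_j$ carries each side onto the abutting interval $[c_j, c_{j+1}]$ of matching length $2r\sin\frac{\pi}{m}$, and conclude by concatenating these per-side isometries in counterclockwise order. The only difference is thoroughness---the paper merely asserts $T_j(A_jA_{j+1})=[c_j,c_{j+1}]$ and infers distance preservation from equal lengths and bijectivity, whereas you explicitly verify the endpoint identities $T_j(A_j)=c_j$ and $T_j(A_{j+1})=c_{j+1}$ (your trigonometric computation is correct) and invoke affineness of $T_j$, thereby filling in the details the paper leaves implicit.
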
 
\begin{proof} 
Notice that 
\begin{align*} [0, 2mr\sin \frac{\pi} m]=\UU_{j=1}^m [c_j, c_{j+1}]  \te{ and } T(A_jA_{j+1})=T_j(A_jA_{j+1})=[c_j, c_{j+1}],
\end{align*}
where $c_j$ are given by \eqref{Me3} for all $1\leq j\leq m$. 
Since the length of $A_jA_{j+1}$ equals the length of the closed interval $[c_j, c_{j+1}]$, and $T_j$ is a bijection, we can say that $T: L\setminus \set{(r, 0)} \to (0,2mr\sin \frac{\pi} m) $ preserves the distance. 
Thus, the lemma is yielded. 
\end{proof} 

The following lemma which is similar to Lemma~\ref{lemma21} is also true here. 
\begin{lemma} \label{lemma211} 
The image measure $Q$ is a uniform distribution on $[0,2m r\sin \frac{\pi} m]$. 
\end{lemma}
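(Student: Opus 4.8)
The plan is to mirror the proof of Lemma~\ref{lemma21}, exploiting the fact that $T$ preserves distance (Lemma~\ref{lemma221}) together with the uniformity of $P$ on $L$. First I would recall that $P$ is uniform on $L$ with constant density $\frac{1}{m\ell}$ with respect to arc length, where the total length of the polygonal boundary is $m\ell=2mr\sin\frac{\pi}{m}$. As in Lemma~\ref{lemma21}, since deleting the single point $(r,0)$ from the support of a continuous measure does not change the distribution, it suffices to regard $P$ as a uniform distribution on $L\setminus\set{(r,0)}$ and to work with the bijection $T: L\setminus\set{(r,0)} \to (0, 2mr\sin\frac{\pi}{m})$.

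The key step is to compute $Q([c,d])$ for an arbitrary subinterval $[c,d]\ci (0, 2mr\sin\frac{\pi}{m})$ with $0<c<d<2mr\sin\frac{\pi}{m}$. By definition $Q([c,d])=P(T^{-1}([c,d]))$. Because $T$ is a distance-preserving bijection by Lemma~\ref{lemma221}, the preimage $T^{-1}([c,d])$ is a subarc of $L$ whose arc length equals the length $d-c$ of $[c,d]$. Since $P$ assigns to any subarc a mass equal to its arc length divided by the total length $2mr\sin\frac{\pi}{m}$, I would conclude
\[Q([c,d])=\frac{d-c}{2mr\sin\frac{\pi}{m}}=\gl([c,d]),\]
where $\gl$ is the normalized Lebesgue measure on $(0, 2mr\sin\frac{\pi}{m})$. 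As in Lemma~\ref{lemma21}, agreement on all such intervals shows that $Q$ is a uniform distribution on $(0, 2mr\sin\frac{\pi}{m})$, hence on $[0, 2mr\sin\frac{\pi}{m}]$.

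The one point requiring care is that $[c,d]$ may straddle several of the subintervals $[c_j, c_{j+1}]$, so that $T^{-1}([c,d])$ spans more than one side $L_j$ of the polygon and $T$ acts by a different affine map $T_j$ on each piece. The main obstacle is therefore to verify that distance preservation genuinely holds across these joints rather than merely within a single side; but this is exactly the content of Lemma~\ref{lemma221}, which establishes $T(A_jA_{j+1})=[c_j, c_{j+1}]$ with matching lengths for every $j$, so the arc-length accounting above is valid regardless of how many sides the preimage meets. Once this is invoked, no further computation is needed.
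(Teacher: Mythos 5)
Your proof is correct and takes essentially the same approach the paper intends: the paper gives no separate proof of this lemma, merely remarking that it is ``similar to Lemma~\ref{lemma21},'' and your argument is exactly that adaptation --- computing $Q([c,d])$ through the distance-preserving bijection $T$ (Lemma~\ref{lemma221}) and identifying it with the normalized Lebesgue measure of $[c,d]$. Your added care about intervals $[c,d]$ straddling several of the pieces $[c_j,c_{j+1}]$, where $T$ acts by different affine maps $T_j$, addresses precisely the one point where the polygon case differs from the circle case, and the paper leaves this implicit.
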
 

\begin{remark} \label{rem0} 
Let $\set{a_1, a_2, \cdots, a_n}$ be a conditional unconstrained optimal set of $n$-points for $Q$ with respect to the conditional set $\set{c_j : 1\leq j \leq m+1}$ such that $a_1<a_2<\cdots <a_n$. Then, by the definition of conditional set, we have $n\geq m+1$. Moreover, notice that $a_1=c_1=0$ and $a_n=c_{m+1}=2m r\sin \frac{\pi} m$. 
Since both $P$ and $Q$ are uniform distributions, and $Q$ is the image measure of $P$ under the function $T$, and $T: L\setminus \set{(r, 0)} \to (0, 2m r\sin \frac{\pi} m) $ preserves the distance, we can say that the set $T^{-1}(\set{a_1, a_2, \cdots, a_{n}})$, i.e., the set  $\set{T^{-1}(a_j) : 1\leq j\leq n-1}$ forms a conditional unconstrained optimal set of $(n-1)$-points for $P$ with respect to the conditional set $\set{T^{-1}(c_j) : 1\leq j\leq m}$, i.e., with respect to the conditional set $\set{A_j : 1\leq j\leq m}$. 
\end{remark}

\begin{lemma} \label{lemma34} 
Let $\gg_n$ be a conditional optimal set of $n$-points for the uniform distribution $Q$ with respect to the conditional set $\set{c_j : 1\leq j\leq m+1}$ for any $n\geq m+1$. Let $n_j=\te{card}(\gg_n\ii [c_j, c_{j+1}])$ for $1\leq j\leq m$. Then, $n_j\geq 2$ and $ n_1+n_2+\cdots +n_m=n+m-1$, and $|n_i-n_{j}|=0 \te{ or } 1$  for all $1\leq i\neq  j\leq m$. 
\end{lemma}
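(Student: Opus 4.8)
The plan is to reduce the claim to a discrete optimization over the $m$ equal subintervals into which the conditional points partition $[0, 2mr\sin\frac{\pi}{m}]$. The points $c_1<c_2<\cdots<c_{m+1}$ given by \eqref{Me3} split this interval into the subintervals $[c_j,c_{j+1}]$, each of common length $L:=2r\sin\frac{\pi}{m}$, on each of which $Q$ has constant density $\frac{1}{mL}$. First I would dispose of the two bookkeeping claims. The bound $n_j\geq 2$ is immediate, since both endpoints $c_j$ and $c_{j+1}$ are conditional points and therefore lie in $\gg_n\ii[c_j,c_{j+1}]$. Because $Q$ has infinite support, a conditional optimal set of $n$-points must have exactly $n$ elements (a distinct extra point strictly lowers the error), so $\gg_n$ consists of the $m+1$ conditional points together with $n-m-1$ free points, each forced by optimality into the interior of a subinterval. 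Counting incidences — the interior conditional points $c_2,\dots,c_m$ each lie in two adjacent subintervals while $c_1$ and $c_{m+1}$ lie in exactly one — gives $\sum_{j=1}^m n_j=2m+(n-m-1)=n+m-1$.

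The substance of the lemma is the balancing statement $|n_i-n_j|\le 1$. The key step is to split the quantization error across subintervals. For $x\in[c_j,c_{j+1}]$ the nearest element of $\gg_n$ lies in $\gg_n\ii[c_j,c_{j+1}]$, because $c_j,c_{j+1}\in\gg_n$ shield $x$ from points in neighboring subintervals; hence
\[ V_n(Q)=\sum_{j=1}^m \int_{c_j}^{c_{j+1}} \min_{a\in\gg_n\ii[c_j,c_{j+1}]} (x-a)^2\, dQ(x). \]
With $n_j$ fixed and both endpoints of $[c_j,c_{j+1}]$ conditional, Theorem~\ref{theoM0}$(i)$ (equivalently Proposition~\ref{prop0} with $c=a$ and $d=b$) shows the $j$th summand is minimized by equally spaced points and equals $\frac{1}{mL}\cdot\frac{L^3}{12(n_j-1)^2}=\frac{L^2}{12m(n_j-1)^2}$, the extra factor $\frac{1}{mL}$ being the density of $Q$ on the full interval. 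Thus an optimal $\gg_n$ is precisely one that minimizes $\sum_{j=1}^m (n_j-1)^{-2}$ subject to $\sum_{j=1}^m(n_j-1)=n-1$ with each $n_j-1\ge 1$.

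Finally I would settle this integer program by a smoothing argument. Put $k_j:=n_j-1\ge 1$ and $\phi(t):=t^{-2}$, and suppose toward a contradiction that an optimal configuration had $k_i\ge k_j+2$ for some $i,j$. Since $\phi$ is convex its forward differences $\phi(t+1)-\phi(t)$ strictly increase in $t$, and $k_i-1\ge k_j+1>k_j$ gives $\phi(k_i)-\phi(k_i-1)>\phi(k_j+1)-\phi(k_j)$; hence replacing $(k_i,k_j)$ by $(k_i-1,k_j+1)$ strictly decreases $\sum_l\phi(k_l)$ while preserving the sum constraint, a contradiction. Therefore no two of the $k_j$, equivalently no two of the $n_j$, differ by more than one, which is exactly $|n_i-n_j|\in\{0,1\}$. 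I expect the main obstacle to be justifying the clean decomposition of the error into mutually independent per-subinterval contributions; once the Voronoi regions are confined to single subintervals, Theorem~\ref{theoM0}$(i)$ handles each piece and the convexity/exchange argument finishes the proof.
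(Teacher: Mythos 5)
Your proof is correct, and it is in fact more complete than the paper's own. The bookkeeping portion coincides with the paper's argument: each $\gg_n\cap[c_j,c_{j+1}]$ contains both conditional endpoints, and since the interior conditional points $c_2,\dots,c_m$ are counted twice while $c_1,c_{m+1}$ are counted once, one gets $n_j\ge 2$ and $\sum_{j=1}^m n_j=n+(m+1-2)=n+m-1$. Where you genuinely add value is the balancing claim $|n_i-n_j|\le 1$: the paper dismisses this as ``routine,'' citing only that $Q$ is uniform and the intervals $[c_j,c_{j+1}]$ have equal length, and gives no argument. You supply the missing substance correctly: the conditional points $c_j,c_{j+1}$ shield each subinterval, so the error splits as a sum of per-interval contributions; Theorem~\ref{theoM0}$(i)$ (i.e., Proposition~\ref{prop0} with $c=a$, $d=b$) evaluates each contribution as $\frac{L^2}{12m(n_j-1)^2}$ with $L=2r\sin\frac{\pi}{m}$; and the resulting integer program $\min\sum_j(n_j-1)^{-2}$ subject to $\sum_j(n_j-1)=n-1$ is settled by your exchange argument, using that the forward differences of the strictly convex function $t\mapsto t^{-2}$ strictly increase. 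This decomposition-plus-smoothing step is exactly what the subsequent Proposition~\ref{prop45} implicitly relies on, so your route does not diverge from the paper's intended one --- it completes it.
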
 
\begin{proof}
Let $\gg_n$ be a conditional optimal set of $n$-points and $n_j$ be the positive integers as defined in the hypothesis. Notice that each of the sets $\gg_n\ii [c_j, c_{j+1}]$ always contains the end elements $c_j,\, c_{j+1}$ for $1\leq j\leq m$, where $n\geq m+1$. Moreover, except the two elements $c_1$ and  $c_{m+1}$ all the end elements $c_j$ are counted two times. Hence,  $n_j\geq 2$ and $n_1+n_2+\cdots +n_m=n+(m+1-2)=n+m-1$.  
Since $Q$ is a unform distribution and the lengths of the intervals $[c_j, c_{j+1}]$ for $1\leq j\leq m$ are all equal, the proof of $|n_i-n_{j}|=0 \te{ or } 1$  for all $1\leq i\neq  j\leq m$ is routine. 
\end{proof} 
 
Let us now give the following proposition. 

\begin{prop} \label{prop45} 
Let $\gg_n$ be a conditional optimal set of $n$-points and $V_n(Q)$ be the $n$th conditional quantization error for the uniform distribution $Q$ with respect to the conditional set $\set{c_j : 1\leq j\leq m+1}$ for any $n\geq m+1$. Let $n=m k +1+q$, where $0\leq q< m$. Then, if $q=0$,  
we have 
\begin{align} \label{Megha101} 
\gg_n& = \UU_{j=1}^{m}\{c_j+\frac{2(i-1)r\sin \frac{\pi}m}{k}  : 1\leq i\leq k+1\} \te{ with } V_n(Q)=\frac {r^2 \sin^2\frac{\pi} m}{3k^2}.
\end{align} 
On the other hand, if $0<q<m$, then there are $^mC_q$ possible sets $\gg_n$, one such set is given by 
\begin{equation}\label{Megha102} 
\begin{aligned} 
\gg_n  &= \Big(\UU_{j=1}^{q}\{c_j+\frac{2(i-1)r\sin \frac{\pi}m}{k+1} : 1\leq i\leq k+2\}\Big)\\
&\qquad \UU \Big(\UU_{j=q+1}^{m}\{c_j+\frac{2(i-1)r\sin \frac{\pi}m}{k}  : 1\leq i\leq k+1\}\Big)\\
\te{ with }    V_n(Q)&=\frac {r^2q\sin^2\frac{\pi} m}{3m(k+1)^2}+ \frac {r^2(m-q)\sin^2\frac{\pi} m}{3mk^2}.
 \end{aligned}
\end{equation}  
 \end{prop}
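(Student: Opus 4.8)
The plan is to exploit the fact that the conditional set contains every partition point $c_j$, which decouples the quantization problem across the $m$ equal subintervals $[c_j,c_{j+1}]$, and then to apply the single-interval result from Theorem~\ref{theoM0}$(i)$ on each piece, using Lemma~\ref{lemma34} to control how the points are distributed among the subintervals.

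First I would establish the decoupling. For any $x\in[c_j,c_{j+1}]$ and any $a\in\gg_n$ lying outside $[c_j,c_{j+1}]$, the nearer of the two conditional points $c_j,c_{j+1}$ is at least as close to $x$ as $a$ is; since both $c_j$ and $c_{j+1}$ belong to $\gg_n$, this forces $\mathop{\min}\limits_{a\in\gg_n}d(x,a)=\mathop{\min}\limits_{a\in\gg_n\ii[c_j,c_{j+1}]}d(x,a)$. Because $Q$ is uniform on $[0,2mr\sin\frac\pi m]$ by Lemma~\ref{lemma211}, with each $[c_j,c_{j+1}]$ carrying mass $\frac1m$, the total error splits as $V_n(Q)=\sum_{j=1}^m\int_{[c_j,c_{j+1}]}\mathop{\min}\limits_{a\in\gg_n\ii[c_j,c_{j+1}]}d(x,a)^2\,dQ(x)$, so that, given the counts $n_j=\te{card}(\gg_n\ii[c_j,c_{j+1}])$, each summand may be minimized independently.

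Next I would treat a single subinterval. On $[c_j,c_{j+1}]$ both endpoints are conditional points, so the restricted problem is exactly the one solved in Theorem~\ref{theoM0}$(i)$ (equivalently Proposition~\ref{prop0} with $c=a$ and $d=b$): the $n_j$ points are equally spaced, giving $\gg_n\ii[c_j,c_{j+1}]=\{c_j+\frac{2(i-1)r\sin\frac\pi m}{n_j-1}:1\le i\le n_j\}$, since $c_{j+1}-c_j=2r\sin\frac\pi m$ by \eqref{Me3}. A direct computation of the resulting distortion over a single gap, weighted by the density $\frac{1}{2mr\sin\frac\pi m}$ of $Q$, yields the per-subinterval contribution $\frac{r^2\sin^2\frac\pi m}{3m(n_j-1)^2}$, whence $V_n(Q)=\sum_{j=1}^m\frac{r^2\sin^2\frac\pi m}{3m(n_j-1)^2}$.

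Finally I would determine the distribution of the $n_j$. By Lemma~\ref{lemma34} we have $n_j\ge2$, $\sum_j n_j=n+m-1$, and $|n_i-n_j|\le1$; writing $n=mk+1+q$ with $0\le q<m$ gives $\sum_j(n_j-1)=mk+q$. When $q=0$ this forces $n_j-1=k$ for every $j$, producing \eqref{Megha101}; when $0<q<m$ it forces exactly $q$ of the counts to satisfy $n_j-1=k+1$ and the remaining $m-q$ to satisfy $n_j-1=k$, the $\binom mq$ choices of which subintervals receive the extra point accounting for the $^mC_q$ optimal sets, and substitution into the error sum gives \eqref{Megha102}. The main obstacle is securing the two structural facts above: the decoupling, which rests on the clean nearest-point argument, and the balanced allocation of the counts, which, although guaranteed by Lemma~\ref{lemma34}, is at bottom the statement that the convex decreasing map $t\mapsto t^{-2}$ has its sum minimized, under a fixed-sum integer constraint, at the most balanced allocation. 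Once these are in place, the remainder is routine substitution into the error formula.
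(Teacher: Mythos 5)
Your proposal is correct and follows essentially the same route as the paper's proof: use the count constraints of Lemma~\ref{lemma34} to pin down the balanced allocation $n_j$, apply Theorem~\ref{theoM0}$(i)$ on each subinterval $[c_j,c_{j+1}]$, and sum the per-interval errors weighted by the mass $\frac1m$. The only difference is that you spell out two points the paper leaves implicit --- the nearest-point decoupling across subintervals and the convexity reason why the balanced allocation is optimal (the ``routine'' part of Lemma~\ref{lemma34}) --- which is a welcome elaboration, not a different method.
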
 

\begin{proof} 
First assume that $q=0$, then we have $n=mk+1$, i.e., $\gg_n$ contains $k-1$ elements from each of the intervals $[c_j, c_{j+1}]$ except the boundary elements $c_j$ and $c_{j+1}$. Hence, if $n_j=\te{card}(\gg_n\ii [c_j, c_{j+1}])$, we have $n_j=k-1+2=k+1$. Hence, by $(i)$ of Theorem~\ref{theoM0}, we have 
\begin{align*}
\gg_n& = \UU_{j=1}^{m}\{c_j+\frac{i-1}{k}(c_{j+1}-c_{j}) : 1\leq i\leq k+1\}=\UU_{j=1}^{m}\{c_j+\frac{2(i-1)r\sin \frac{\pi}m}{k}  : 1\leq i\leq k+1\}
\end{align*}
\[ \te{ with } V_n=\sum_{j=1}^m \frac {(c_{j+1}-c_j)^2}{12mk^2}=\sum_{j=1}^m \frac {r^2 \sin^2\frac{\pi} m}{3mk^2}=\frac {r^2 \sin^2\frac{\pi} m}{3k^2}.\]
On the other hand, if $0<q<m$, then due to Lemma~\ref{lemma34}, we can assume that $\gg_n$ contains $k$ elements from each of the first $q$ intervals except the boundary elements, and $\gg_n$ contains $(k-1)$ elements from each of the remaining $m-q$ intervals except the boundary elements implying $n_1=n_2=\cdots=n_q=k+2$ and $n_{q+1}=n_{q+2}=\cdots=n_m=k+1$. Hence, the expressions for $\gg_n$ and the corresponding $n$th conditional quantization errors  are  obtained by  $(i)$ of Theorem~\ref{theoM0} as 
\begin{align*}
\gg_n  &= \Big(\UU_{j=1}^{q}\{c_j+\frac{2(i-1)r\sin \frac{\pi}m}{k+1} : 1\leq i\leq k+2\}\Big)\\
&\qquad \UU \Big(\UU_{j=q+1}^{m}\{c_j+\frac{2(i-1)r\sin \frac{\pi}m}{k}  : 1\leq i\leq k+1\}\Big)
\end{align*} 
 with  
 \[V_n=\sum_{j=1}^q \frac {(c_{j+1}-c_j)^2}{12 m(k+1)^2}+\sum_{j=q+1}^m \frac {(c_{j+1}-c_j)^2}{12mk^2}=\sum_{j=1}^q \frac {r^2\sin^2\frac{\pi} m}{3m(k+1)^2}+\sum_{j=q+1}^m \frac {r^2\sin^2\frac{\pi} m}{3mk^2}\] 
 yielding \[V_n=\frac {r^2q\sin^2\frac{\pi} m}{3m(k+1)^2}+ \frac {r^2(m-q)\sin^2\frac{\pi} m}{3mk^2}.\]
 Notice that if $n=mk+q$, the optimal set $\gg_n$ can be constructed in $^mC_q$ ways. 
\end{proof} 

The following two theorems give the main results in this section. 

\begin{theorem} \label{theMe0}
Let $\ga_n$ be a conditional optimal set of $n$-points and $V_n(P)$ be the $n$th conditional quantization error for the uniform distribution $P$ with respect to the conditional set $\set{A_j : 1\leq j\leq m}$ for any $n\geq m$. Let $n=m k +q$, where $0\leq q< m$. Then, if $q=0$,  
we have 
\begin{align} \label{Megha103} 
\ga_n& = \UU_{j=1}^{m}T_j^{-1}\{c_j+\frac{2(i-1)r\sin \frac{\pi}m}{k}  : 1\leq i\leq k+1\}  \te{ with } V_n(P)=\frac {r^2 \sin^2\frac{\pi} m}{3k^2}.
\end{align} 
On the other hand, if $0<q<m$, then there are $^mC_q$ possible sets $\ga_n$, one such set is given by 

\begin{equation} \label{Megha104} 
\begin{aligned}  \ga_n & = \Big(\UU_{j=1}^{q}T_j^{-1}\{c_j+\frac{2(i-1)r\sin \frac{\pi}m}{k+1} : 1\leq i\leq k+2\}\Big)\\
&\qquad \UU \Big(\UU_{j=q+1}^{m}T_j^{-1}\{c_j+\frac{2(i-1)r\sin \frac{\pi}m}{k}  : 1\leq i\leq k+1\}\Big) \\
\te{ with } V_n(P)&=\frac {r^2q\sin^2\frac{\pi} m}{3m(k+1)^2}+ \frac {r^2(m-q)\sin^2\frac{\pi} m}{3mk^2}.
\end{aligned} 
\end{equation} 
 \end{theorem}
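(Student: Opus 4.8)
The plan is to transfer the result for the image measure $Q$, established in Proposition~\ref{prop45}, back to $P$ through the inverse of the distance-preserving map $T$, using the correspondence recorded in Remark~\ref{rem0}. Since $Q$ is uniform on $[0,2mr\sin\frac{\pi}m]$ (Lemma~\ref{lemma211}) and $T:L\setminus\set{(r,0)}\to(0,2mr\sin\frac{\pi}m)$ preserves the (arc-length) distance (Lemma~\ref{lemma221}), optimal structures and distortion integrals correspond under $T^{-1}$. The one piece of bookkeeping to watch is a shift in cardinality by one: because $T^{-1}(0)=T^{-1}(2mr\sin\frac{\pi}m)=A_1$, a conditional optimal set of $(n+1)$-points for $Q$ pulls back to a conditional optimal set of $n$-points for $P$.

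First I would set $N=n+1$ and write $N=mk+1+q$ with $0\le q<m$, which is exactly the decomposition $n=mk+q$ appearing in the statement; note that $n\ge m$ is equivalent to $N\ge m+1$. Then Proposition~\ref{prop45} applies verbatim and furnishes the conditional optimal set $\gg_{N}$ of $N$-points for $Q$ with respect to $\set{c_j:1\le j\le m+1}$, together with its quantization error $V_N(Q)$, in the two cases $q=0$ (formula \eqref{Megha101}) and $0<q<m$ (formula \eqref{Megha102}).

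Next I would invoke Remark~\ref{rem0}: the set $T^{-1}(\gg_N)=\set{T^{-1}(a):a\in\gg_N}$ is a conditional optimal set of $(N-1)$-points, i.e. of $n$-points, for $P$ with respect to $\set{A_j:1\le j\le m}$. Applying $T^{-1}$ piecewise, i.e. using $T_j^{-1}$ on each interval $[c_j,c_{j+1}]$ (so that $T_j^{-1}(c_j)=A_j$ and $T_j^{-1}(c_{j+1})=A_{j+1}$), turns each piece $\set{c_j+\frac{2(i-1)r\sin\frac{\pi}m}{k}:1\le i\le k+1}$ of $\gg_N$ into the corresponding piece $T_j^{-1}\set{c_j+\frac{2(i-1)r\sin\frac{\pi}m}{k}:1\le i\le k+1}$ of $\ga_n$, yielding exactly \eqref{Megha103} when $q=0$ and \eqref{Megha104} when $0<q<m$. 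The count comes out right because the union over $j$ identifies the shared vertices $A_{j+1}$ and, through the wrap-around $c_{m+1}\mapsto A_1=c_1$, collapses the two endpoints of $\gg_N$ into the single conditional point $A_1$, reducing $N$ points to $n$.

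Finally, because each $T_j$ is an isometry carrying $P|_{L_j}$ to $Q|_{[c_j,c_{j+1}]}$, the squared-distance distortion integrals are preserved side by side, so $V_n(P)=V_N(Q)$; reading off the two expressions from Proposition~\ref{prop45} gives exactly the stated errors $\frac{r^2\sin^2\frac{\pi}m}{3k^2}$ and $\frac{r^2q\sin^2\frac{\pi}m}{3m(k+1)^2}+\frac{r^2(m-q)\sin^2\frac{\pi}m}{3mk^2}$, while the $^mC_q$ multiplicity is inherited directly, since the choice of which $q$ intervals carry the extra point is unaffected by the bijection. I expect no genuine difficulty here beyond the cardinality shift and the wrap-around identification of the endpoints; once those are handled, the result is a direct consequence of the distance-preservation of $T$ and the already-established optimal structure for $Q$.
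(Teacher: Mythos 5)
Your proposal is correct and follows essentially the same route as the paper's own proof: apply Proposition~\ref{prop45} to the image measure $Q$, pull the optimal set back through $T_j^{-1}$ via Remark~\ref{rem0} (with the cardinality shift $N=n+1$ coming from the identification $T^{-1}(0)=T^{-1}(2mr\sin\frac{\pi}{m})=A_1$), and use the distance-preservation of the maps $T_j$ to conclude $V_n(P)=V_{n+1}(Q)$ and inherit the $^mC_q$ multiplicity. Your write-up is, if anything, slightly more careful than the paper's in making the index shift $N=mk+1+q$ versus $n=mk+q$ explicit.
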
 
\begin{proof}
For $n=mk+1+q$, where $0\leq q<m$, let $\gg_n$ be a conditional optimal set of $n$-points for $Q$ as given by Proposition~\ref{prop45} with respect to the conditional set $\set{c_j : 1\leq j\leq m+1}$ for $n\geq m+1$. First assume that $q=0$. Then, $\gg_n$ is given by \eqref{Megha101}. Then, by Remark~\ref{rem0}, the set $\ga_n$ given by \eqref{Megha103} forms a conditional optimal set of $n$-points  with respect to the conditional set $\set{A_j : 1\leq j\leq m}$.  Next, assume that $0<q<m$. Then, $\gg_n$ is given by \eqref{Megha102}. Then, by Remark~\ref{rem0}, the set $\ga_n$ given by \eqref{Megha104} forms a conditional optimal set of $n$-points  with respect to the conditional set $\set{A_j : 1\leq j\leq m}$. If $n=mk+q$, the optimal set $\gg_n$ can be constructed in $^mC_q$ ways, and so is the set $\ga_n$.
Recall that the bijective functions $T_j$ preserve the distance as well as the collinearity of the elements in each interval $[c_j, c_{j+1}]$. Hence, the $(mk+q)$th-conditional quantization error with respect to the uniform distribution $P$ remains same as the $(mk+1+q)$th-conditional quantization error with respect to the uniform distribution $Q$. Thus, the expressions for quantization errors $V_n(P)$ given by \eqref{Megha103} and \eqref{Megha104} are followed from the expressions given by \eqref{Megha101} and \eqref{Megha102}. 
\end{proof}

\begin{theorem}\label{theo45}
Let $P$ be the uniform distribution defined on the boundary of a regular $m$-sided polygon inscribed in a circle of radius $r$ with center $(0, 0)$. Then, the conditional quantization coefficient for $P$ exists as a finite positive number and equals $\frac{1}{3} m^2r^2 \sin ^2(\frac{\pi }{m})$, i.e.,
$\lim\limits_{n\to \infty} n^2 (V_n(P)-V_\infty(P))=\frac{1}{3} m^2 r^2 \sin ^2\frac{\pi }{m}.$
\end{theorem}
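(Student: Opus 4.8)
The plan is to extract the closed-form expression for $V_n(P)$ from Theorem~\ref{theMe0} and then evaluate $\lim_{n\to\infty} n^2\bigl(V_n(P)-V_\infty(P)\bigr)$ directly, the only genuine subtlety being that the formula for $V_n(P)$ depends on the residue of $n$ modulo $m$. First I would record that $V_\infty(P)=\lim_{n\to\infty}V_n(P)=0$: writing $n=mk+q$ with $0\le q<m$ forces $k\to\infty$ as $n\to\infty$, and each of the expressions in \eqref{Megha103} and \eqref{Megha104} carries a factor $k^{-2}$ or $(k+1)^{-2}$ and hence tends to $0$. Consequently it suffices to evaluate $\lim_{n\to\infty} n^2 V_n(P)$.

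Next I would split into the two branches supplied by Theorem~\ref{theMe0}. When $q=0$, so $n=mk$, the theorem gives $V_n(P)=\frac{r^2\sin^2\frac{\pi}m}{3k^2}$, whence $n^2 V_n(P)=m^2k^2\cdot\frac{r^2\sin^2\frac{\pi}m}{3k^2}=\frac{m^2r^2\sin^2\frac{\pi}m}{3}$ identically, with no dependence on $k$. When $0<q<m$, substituting the expression of \eqref{Megha104} yields
\[
n^2V_n(P)=(mk+q)^2\left(\frac{r^2q\sin^2\frac{\pi}m}{3m(k+1)^2}+\frac{r^2(m-q)\sin^2\frac{\pi}m}{3mk^2}\right).
\]
Since $q$ ranges over the finite set $\{1,\dots,m-1\}$ and is therefore bounded, as $k\to\infty$ we have $(mk+q)^2/(k+1)^2\to m^2$ and $(mk+q)^2/k^2\to m^2$; passing to the limit term by term, the two contributions become $\frac{m r^2 q\sin^2\frac{\pi}m}{3}$ and $\frac{m r^2(m-q)\sin^2\frac{\pi}m}{3}$.

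The key point is that these recombine as $\frac{m r^2\sin^2\frac{\pi}m}{3}\bigl(q+(m-q)\bigr)=\frac{m^2r^2\sin^2\frac{\pi}m}{3}$, exactly matching the value found for $q=0$, so the dependence on the residue $q$ cancels. As $m$ is fixed there are only finitely many residue classes $q\in\{0,1,\dots,m-1\}$, and along each of them the subsequence $n^2V_n(P)$ converges to the same limit $\frac{m^2r^2\sin^2\frac{\pi}m}{3}$; hence the full limit exists and equals this common value. Finally I would observe that $\sin\frac{\pi}m>0$ for every integer $m\ge3$, so the coefficient $\frac13 m^2 r^2\sin^2\frac{\pi}m$ is indeed a finite positive number, which completes the proof. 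The one place demanding care is the cancellation of the $q$-dependence across the two branches; everything else is routine asymptotic bookkeeping.
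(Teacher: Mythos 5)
Your proof is correct and follows essentially the same route as the paper: both extract the formula for $V_n(P)$ from Theorem~\ref{theMe0} with $n=mk+q$, observe $V_\infty(P)=0$, and evaluate $\lim_{n\to\infty} n^2V_n(P)$. The only cosmetic difference is that the paper merges both cases into a single rational expression in $n$ and $q$ (substituting $k=(n-q)/m$) and reads off the limit, whereas you treat the residue classes $q\in\{0,1,\dots,m-1\}$ separately and invoke the finitely-many-subsequences argument --- if anything, a slightly more explicit justification of the same computation.
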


\begin{proof} Let $n\in \D N$ be such that $n\geq m$. Then, there exists a unique positive integer $k$ such that $n=mk+q$ for some $0\leq q<m$. Then, by Theorem~\ref{theMe0}, we have 
\begin{align*}  V_n(P)&=\frac {r^2q\sin^2\frac{\pi} m}{3m(k+1)^2}+ \frac {r^2(m-q)\sin^2\frac{\pi} m}{3mk^2}\\
&=\frac{m^2 r^2 \sin ^2\left(\frac{\pi }{m}\right) \left(m^2+2 m n-3 m q+n^2-4 n q+3 q^2\right)}{3 (n-q)^2 (m+n-q)^2}.
\end{align*} 
Then, we see that $V_{\infty}(P)=\lim_{n\to \infty} V_n(P)=0$. In fact, we have 
\[\lim\limits_{n\to \infty} n^2 (V_n(P)-V_\infty(P))=\frac{1}{3} m^2 r^2 \sin ^2\frac{\pi }{m}.\]
\end{proof}

\begin{remark} \label{rem8} 
By Theorem~\ref{TheoM1} and Theorem~\ref{theo45}, we can say that the quantization coefficient for the uniform distribution $P$ defined on the boundary of a regular $m$-sided polygon is $\frac{1}{3} m^2r^2 \sin ^2\frac{\pi }{m}$, which is a finite positive number, but it is not a constant as it depends on both $m$ and $r$, where $m$ is the number of sides of the polygon and $r$ is the radius of the circle in which the polygon is inscribed, and this lead us to conclude that the quantization coefficient for a uniform distribution defined on the boundary of a regular $m$-sided polygon depends on both the number of sides of the polygon and the length of the sides.  
\end{remark} 

\section{Conclusion and Future Work}

In this paper, we studied the conditional quantization theory for uniform distributions supported on various geometric structures, specifically line segments, circles, and boundaries of regular polygons. For each case, we computed the conditional optimal sets of $n$-points and the corresponding $n$th conditional quantization errors under both constrained and unconstrained scenarios. We established that while the quantization dimension remains invariant---coinciding with the Euclidean dimension of the underlying space---the quantization coefficient is sensitive to the geometry of the support. Notably, we showed that:
\begin{itemize}
    \item For uniform distributions on line segments, the quantization coefficient depends solely on the length of the segment and is independent of the conditional set;
    \item For distributions on circles, the quantization coefficient depends on the radius of the circle;
    \item For distributions on boundaries of regular polygons, the quantization coefficient depends on both the radius of the circumscribing circle and the number of polygon sides.
\end{itemize}

These findings affirm that the quantization coefficient, in contrast to the quantization dimension, reflects finer structural properties of the support, such as size and shape.

Building on the results of this paper, several promising directions for future research can be identified:

\begin{enumerate}
    \item \textbf{Extension to Non-Uniform Distributions:} Investigating conditional quantization for absolutely continuous, non-uniform distributions (e.g., exponential or beta distributions) on bounded geometric supports remains an open and compelling problem.

    \item \textbf{Quantization on Fractal Supports:} Analyzing conditional quantization on self-similar and self-affine fractal sets (e.g., Cantor sets, Koch curves, Sierpiński gaskets) could further enrich the theory and reveal new structural dependencies.

    \item \textbf{Algorithmic and Computational Aspects:} Developing efficient algorithms to numerically compute optimal sets of $n$-points and corresponding quantization coefficients for complex supports and arbitrary distributions could bridge theory with practical applications.

    \item \textbf{Applications to Information Theory and Signal Processing:} Exploring how conditional and constrained quantization strategies can be leveraged in source coding, image compression, and sensor network optimization may lead to impactful applications in engineering.
\end{enumerate}


\begin{thebibliography}{999}



\bibitem{AW}
E.~F. Abaya and G.~L. Wise.
\newblock Some remarks on the existence of optimal quantizers.
\newblock {\em Statistics \& Probability Letters,} 2(6):349--351, 1984.

\bibitem{BCDRV}
P.~Biteng, M.~Caguiat, D.~Deb, M.~K. Roychowdhury, and B.~Villanueva.
\newblock Constrained quantization for a uniform distribution.
\newblock {\em Houston Journal of Mathematics}, 50(1):121--142, 2024.

\bibitem{BCDR}
P.~Biteng, M.~Caguiat, T.~Dominguez, and M.~K. Roychowdhury.
\newblock Conditional quantization for uniform distributions on line segments
  and regular polygons.
\newblock {\em Mathematics}, 13(7):1024, 2025.

\bibitem{BW}
J.~Bucklew and G.~Wise.
\newblock Multidimensional asymptotic quantization theory with $r$th power
  distortion measures.
\newblock {\em IEEE Transactions on Information Theory}, 28(2):239--247, 1982.


\bibitem{DFG}
Q.~Du, V.~Faber, and M.~Gunzburger.
\newblock Centroidal {V}oronoi tessellations: Applications and algorithms.
\newblock {\em SIAM review}, 41(4):637--676, 1999.

\bibitem{GG}
A.~Gersho and R.~M. Gray.
\newblock {\em Vector quantization and signal compression.} 
\newblock { 1992 by Kluwer Academic Publishers.} 

\bibitem{GL2}
S.~Graf and H.~Luschgy.
\newblock The quantization of the {C}antor distribution.
\newblock {\em Mathematische Nachrichten}, 183(1):113--133, 1997.

\bibitem{GL3}
S.~Graf and H.~Luschgy.
\newblock Quantization for probability measures with respect to the geometric
  mean error.
\newblock {\em Mathematical Proceedings of the Cambridge Philosophical
  Society}, volume 136, pages 687--717. Cambridge University Press, 2004.

\bibitem{GL}
S.~Graf and H.~Luschgy.
\newblock {\em Foundations of Quantization for Probability Distributions}.
\newblock Springer, 2000.

\bibitem{GKL}
R.~M. Gray, J.~C. Kieffer, and Y.~Linde.
\newblock Locally optimal block quantizer design.
\newblock {\em Information and Control,} 45:178--198, 1980.

\bibitem{GN}
R.~M. Gray and D.~L. Neuhoff.
\newblock Quantization.
\newblock {\em IEEE Transactions on Information Theory}, 44(6):2325--2383,
  1998.

\bibitem{GL1}
A.~Gyorgy and T.~Linder.
\newblock On the structure of optimal entropy-constrained scalar quantizers.
\newblock {\em IEEE Transactions on Information Theory}, 48(2):416--427, 2002.

\bibitem{HNPR}
C.~Hamilton, E.~Nyanney, M.~Pandey, and M.~K. Roychowdhury.
\newblock Conditional constrained and unconstrained quantization for uniform
  distributions on regular polygons.
\newblock {\em Real Analysis Exchange}, 2025.

\bibitem{KNZ}
M.~Kesseb{\"o}hmer, A.~Niemann, and S.~Zhu.
\newblock Quantization dimensions of compactly supported probability measures
  via {R}{\'e}nyi dimensions.
\newblock {\em Transactions of the American Mathematical Society},
  376(07):4661--4678, 2023.

\bibitem{PR4}
M.~Pandey and M.~K. Roychowdhury.
\newblock Conditional constrained and unconstrained quantization for
  probability distributions.
\newblock {\em arXiv preprint arXiv:2312.02965}, 2023.

\bibitem{PR3}
M.~Pandey and M.~K. Roychowdhury.
\newblock Constrained quantization for a uniform distribution with respect to a
  family of constraints.
\newblock {\em arXiv e-prints}, pages arXiv--2309, 2023.

%

\bibitem{PR1}
M.~Pandey and M.~K. Roychowdhury.
\newblock Constrained quantization for probability distributions.
\newblock {\em To appear, Journal of Fractal Geometry}, 2025.

\bibitem{P}
D.~Pollard.
\newblock Quantization and the method of $k$-means.
\newblock {\em IEEE Transactions on Information theory}, 28(2):199--205, 1982.

\bibitem{P1}
K.~P{\"o}tzelberger.
\newblock The quantization dimension of distributions.
\newblock In {\em Mathematical Proceedings of the Cambridge Philosophical
  Society}, volume 131, pages 507--519. Cambridge University Press, 2001.

\bibitem{RR}
J.~Rosenblatt and M.~K. Roychowdhury.
\newblock Uniform distributions on curves and quantization.
\newblock {\em Communications of the Korean Mathematical Society},
  38(2):431--450, 2023.

%
%

\bibitem{Z1}
P.~Zador.
\newblock Asymptotic quantization error of continuous signals and the
  quantization dimension.
\newblock {\em IEEE Transactions on Information Theory}, 28(2):139--149, 1982.

\bibitem{Z2}
R.~Zamir.
\newblock {\em Lattice Coding for Signals and Networks: A Structured Coding
  Approach to Quantization, Modulation, and Multiuser Information Theory}.
\newblock Cambridge University Press, 2014.

\end{thebibliography}
\end{document}